\documentclass[12pt,reqno]{amsart}

\setlength{\textheight}{23cm}
\setlength{\textwidth}{16.5cm}
\setlength{\topmargin}{-0.8cm}
\setlength{\parskip}{0.3\baselineskip}
\hoffset=-1.9cm

\usepackage{amssymb}
\usepackage{amsmath}
\usepackage{amscd}
\usepackage{amstext}
\usepackage{amsfonts}
\usepackage[all]{xy}

\newtheorem{thm}{Theorem}[section] 

\newtheorem{prop}[thm]{Proposition}
\newtheorem{cor}[thm]{Corollary}
\newtheorem{rem}[thm]{Remark}
\theoremstyle{definition}

\newtheorem{example}[thm]{Example}

\newcommand{\C}{\mathbb{C}}
\newcommand{\R}{\mathbb{R}}

\newcommand{\Q}{\mathbb{Q}}
\newcommand{\Z}{\mathbb{Z}}
\newcommand{\g}{\frak{g}}

\numberwithin{equation}{section}

\begin{document} 

\title[Line bundles and Hodge symmetry on Oeljeklaus-Toma manifolds]{Cohomology of  holomorphic line bundles and Hodge symmetry on Oeljeklaus-Toma manifolds}

\author[H. Kasuya]{Hisashi Kasuya}

\address{Department of Mathematics, Graduate School of Science, Osaka University, Osaka,
Japan}

\email{kasuya@math.sci.osaka-u.ac.jp}

\subjclass[2020]{22E25, 32L10, 53C55, 58A14}

\keywords{Oeljeklaus-Toma manifold, solvmanifold,  Dolbeault cohomology, holomorphic Line bundle}

\begin{abstract}
We prove the Hodge symmetry type result on the Dolbeault cohomology of Oeljeklaus-Toma manifolds with values in the direct sum of holomorphic line bundles.
Consequently, we show the vanishing and non-vanishing of Dolbeault cohomology of Oeljeklaus-Toma manifolds with values in  holomorphic line bundles.

\end{abstract}

\maketitle
\section{Introduction}

For positive integers $s,t$, let $K$ be a finite extension field of $\Q$ of degree $s+2t$
 admitting  embeddings $\sigma_{1},\dots \sigma_{s},\sigma_{s+1},\dots, \sigma_{s+2t}$ into $\C$ such that $\sigma_{1},\dots ,\sigma_{s}$ are real embeddings and $\sigma_{s+1},\dots, \sigma_{s+2t}$ are complex ones satisfying $\sigma_{s+i}=\bar \sigma_{s+i+t}$ for $1\le i\le t$. 
Denote by ${\mathcal O}_{K}$  the ring of algebraic integers of $K$.
In \cite{OT}, for a free subgroup $U$ of rank $s$ in  the group of units in ${\mathcal O}_{K}$ satisfying certain conditions related to the embeddings $\sigma_{1},\dots \sigma_{s},\sigma_{s+1},\dots, \sigma_{s+2t}$, 
 Oeljeklaus and  Toma construct a $(s+t)$-dimensional complex manifold $X(K,U)$ whose fundamental group is the semi-direct product $U\ltimes {\mathcal O}_{K}$ of finitely generated free abelian groups $U$ and  ${\mathcal O}_{K}$.
 We call this complex manifold an  Oeljeklaus-Toma (OT) manifold.
 For any OT-manifold $X(K,U)$, the Hodge symmetry 
 \[\dim H^{p,q}(X(K,U))=\dim H^{q,p}(X(K,U))\]
  on the Dolbeault cohomology does not hold (see \cite[Proposition 2.5]{OT}).
 Hence every OT-manifold $X(K,U)$ is a non-K\"ahler complex manifold.
Meanwhile, in \cite{OTH}, Otiman and Toma show that the Hodge decomposition 
\[\dim H^{r}(X(K,U),\C)=\sum_{p+q=r} \dim H^{p,q}(X(K,U))\]
 holds for any OT-manifold $X(K,U)$.

 Consider the set ${\rm Hom}(U,\C^{\ast})$ of group homomorphisms from $U$ to $\C^{\ast}$.
Corresponding representations of the fundamental group of $X(K,U)$ to flat bundles over $X(K,U)$, this set is identified with a set ${\mathcal A}(U)$ of isomorphism classes of flat complex line bundles over  $X(K,U)$.
 For $E\in {\mathcal A}(U)$,  we consider the de Rham cohomology $H^{\ast}(X(K,U),  E)$ with values in $E$.
 We have the following result.
 \begin{prop}\label{Pco}
 For any integer $r$, we have
\[\dim \bigwedge ^{r}\C^{2s+2t} = \sum_{E\in {\mathcal A}(U)}\dim H^{r}(X(K,U),E).
\] 
 \end{prop}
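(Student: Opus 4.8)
The plan is to reduce the statement to a purely group-cohomological computation and then evaluate the resulting sum by Vandermonde's identity. Write $\Gamma = U\ltimes {\mathcal O}_{K}$ for the fundamental group, and recall that $X(K,U)$, being a compact quotient of the contractible universal cover $\mathbb{H}^{s}\times\C^{t}$, is a $K(\Gamma,1)$. Hence for the flat line bundle $E=E_{\rho}$ attached to $\rho\in {\rm Hom}(U,\C^{\ast})={\mathcal A}(U)$ one has $H^{r}(X(K,U),E_{\rho})\cong H^{r}(\Gamma,\C_{\rho})$, where $\C_{\rho}$ is the one-dimensional $\Gamma$-module on which ${\mathcal O}_{K}$ acts trivially and $U$ acts through $\rho$. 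So it suffices to prove $\sum_{\rho}\dim H^{r}(\Gamma,\C_{\rho})=\binom{2s+2t}{r}$.

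First I would run the Hochschild--Serre spectral sequence of $1\to{\mathcal O}_{K}\to\Gamma\to U\to 1$. Since ${\mathcal O}_{K}\cong\Z^{s+2t}$ acts trivially on $\C_{\rho}$, its cohomology is the exterior algebra $H^{q}({\mathcal O}_{K},\C)=\bigwedge^{q}({\mathcal O}_{K}^{\ast}\otimes\C)$, so $E_{2}^{p,q}=H^{p}(U,\bigwedge^{q}({\mathcal O}_{K}^{\ast}\otimes\C)\otimes\C_{\rho})$. The conjugation action of $U$ on ${\mathcal O}_{K}\otimes\C$ is, after diagonalization by the embeddings $\sigma_{1},\dots,\sigma_{s+2t}$, the semisimple action with eigencharacters $u\mapsto\sigma_{j}(u)$; consequently $\bigwedge^{\ast}({\mathcal O}_{K}^{\ast}\otimes\C)$ splits into $U$-weight lines $\C_{\chi_{S}}$ indexed by subsets $S\subseteq\{1,\dots,s+2t\}$, where $\chi_{S}=\prod_{j\in S}\sigma_{j}^{-1}$ and the line sits in exterior degree $q=|S|$. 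The key elementary input is that for $U\cong\Z^{s}$ one has $H^{p}(\Z^{s},\C_{\psi})=0$ unless $\psi=1$, in which case $H^{p}(\Z^{s},\C)\cong\bigwedge^{p}\C^{s}$ has dimension $\binom{s}{p}$. Thus $E_{2}^{p,q}$ is supported precisely on those $S$ with $|S|=q$ and $\chi_{S}\rho=1$, each contributing a copy of $\bigwedge^{p}\C^{s}$.

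The main obstacle is to show this spectral sequence degenerates at $E_{2}$, so that $\dim H^{r}(\Gamma,\C_{\rho})=\sum_{p+q=r}\dim E_{2}^{p,q}=\sum_{S:\,\chi_{S}=\rho^{-1}}\binom{s}{r-|S|}$. Here I would avoid the higher differentials altogether by computing $H^{\ast}(\Gamma,\C_{\rho})$ through the polycyclic structure of $\Gamma$: choosing generators $u_{1},\dots,u_{s}$ of $U$ and filtering $\Gamma$ as an iterated semidirect product by copies of $\Z$, each extension step is governed by a Wang exact sequence, i.e.\ a two-column spectral sequence with no higher differentials. Because $u_{1},\dots,u_{s}$ act as commuting semisimple operators on $\bigwedge^{\ast}({\mathcal O}_{K}^{\ast}\otimes\C)$, at every step the relevant map $g^{\ast}-1$ is semisimple, so its kernel and cokernel have equal dimension and the surviving classes are exactly the simultaneous $1$-eigenvectors; iterating produces the factor $\bigwedge^{\ast}\C^{s}$ on each weight line with $\chi_{S}\rho=1$ and confirms the displayed formula. (Equivalently, one checks directly that the Hochschild--Serre differentials vanish on the weight-$\rho^{-1}$ pieces surviving to $E_{2}$.)

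Finally I would sum over all $\rho$. Each subset $S$ contributes $\binom{s}{r-|S|}$ to the single character $\rho=\chi_{S}^{-1}$, so
\[
\sum_{\rho}\dim H^{r}(\Gamma,\C_{\rho})=\sum_{S\subseteq\{1,\dots,s+2t\}}\binom{s}{r-|S|}=\sum_{q=0}^{s+2t}\binom{s+2t}{q}\binom{s}{r-q}=\binom{2s+2t}{r},
\]
the last equality being Vandermonde's identity. Since $\dim\bigwedge^{r}\C^{2s+2t}=\binom{2s+2t}{r}$, this is exactly the asserted identity. In particular the sum on the right-hand side of the Proposition is finite, since $H^{r}(\Gamma,\C_{\rho})$ vanishes for all but the finitely many $\rho$ of the form $\chi_{S}^{-1}$.
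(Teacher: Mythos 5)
Your reduction to group cohomology is correct ($X(K,U)$ is aspherical, so $H^{r}(X(K,U),E_{\rho})\cong H^{r}(\Gamma,\C_{\rho})$ for $\Gamma=U\ltimes{\mathcal O}_{K}$), your identification of the Hochschild--Serre $E_{2}$-page is correct, and the closing Vandermonde count is correct. The genuine gap is the degeneration step, and it is not a side issue: everything before it yields only the inequality $\sum_{\rho}\dim H^{r}(\Gamma,\C_{\rho})\le\binom{2s+2t}{r}$, and closing this inequality is exactly the hard content that the paper outsources to \cite[Theorem 1.4, Lemma 5.2]{KR}, from which Proposition \ref{III} and then the statement follow. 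Your justification -- iterate Wang sequences over generators $u_{1},\dots,u_{s}$ of $U$ and claim that ``at every step the relevant map $g^{\ast}-1$ is semisimple'' -- is not established. Semisimplicity is proved only on $H^{\ast}({\mathcal O}_{K},\C_{\rho})$, i.e.\ at the first step. At the second step $u_{2}$ acts on $H^{n}(G_{1},\C_{\rho})$, $G_{1}=\langle u_{1}\rangle\ltimes{\mathcal O}_{K}$, which the Wang sequence presents only as an extension of $u_{2}$-modules
\[0\to H^{n-1}({\mathcal O}_{K},\C_{\rho})_{u_{1}}\to H^{n}(G_{1},\C_{\rho})\to H^{n}({\mathcal O}_{K},\C_{\rho})^{u_{1}}\to 0 .\]
Sub and quotient are indeed semisimple, but they come from different exterior degrees ($n-1$ and $n$) and can share $u_{2}$-weights, so the extension can a priori be non-split; a Jordan block at eigenvalue $1$ linking sub and quotient makes $\dim\ker(u_{2}^{\ast}-1)$ on the middle term strictly smaller than the sum of the kernels on sub and quotient, and your induction then again produces only an upper bound. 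Weight coincidences of this kind cannot be waved away: multiplicative relations among the $\sigma_{j}$ restricted to $U$ genuinely occur (in the paper's last example, due to Otiman, $\sigma_{1}\sigma_{3}\sigma_{5}=1$ identically on $U$). The parenthetical fallback ``one checks directly that the Hochschild--Serre differentials vanish'' is the same gap restated: for a fixed $\rho$ the nonzero rows of $E_{2}$ can differ by more than one, so higher differentials between nonzero groups are possible a priori, and no check is actually supplied.

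The gap is fixable inside your framework, but the fix needs one more idea, and it is essentially the structural idea the paper itself exploits: treat the direct sum over all $\rho$ as a single multiplicative object. The pairings $\C_{\rho_{1}}\otimes\C_{\rho_{2}}\to\C_{\rho_{1}\rho_{2}}$ make $\bigoplus_{\rho}E_{\ast}^{\ast,\ast}(\rho)$ a multiplicative spectral sequence whose total $E_{2}$-page is the bigraded algebra $\bigwedge\C^{s}\otimes\bigwedge\C^{s+2t}$, generated by $E_{2}^{1,0}$ (living over $\rho=1$) and $E_{2}^{0,1}$ (the lines $e_{j}$, each living over $\rho=\sigma_{j}\vert_{U}$). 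All differentials annihilate these generators: on bidegree $(1,0)$, and on $(0,1)$ for $r\ge 3$, the target of $d_{r}$ lies in a negative row, while $d_{2}e_{j}\in E_{2}^{2,0}(\sigma_{j}\vert_{U})=H^{2}(U,\C_{\sigma_{j}\vert_{U}})=0$ because $\sigma_{j}\vert_{U}$ is a nontrivial character of $U\cong\Z^{s}$ (each $\sigma_{j}$ is a field embedding, hence injective on $U\neq\{1\}$). The Leibniz rule then kills every differential, giving degeneration and hence equality. With that replacement your argument becomes a complete proof and a genuinely different one from the paper's: the paper never touches group cohomology, but instead presents $X(K,U)$ as a solvmanifold $\Gamma\backslash G$ and quotes \cite{KR} to identify $\bigoplus_{\alpha}H^{\ast}(\Gamma\backslash G,E_{\alpha})$ with the exterior algebra $\bigwedge V$ on an explicit $(2s+2t)$-dimensional space of forms with vanishing differential.
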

 As explained in \cite{KV}, every OT-manifold is diffeomorphic  to a solvmanifold.
This proposition is a consequence of the cohomology computation of solvmanifolds in \cite{KR}. 
 
Regarding each $E\in {\mathcal A}(U)$ as a holomorphic line bundle over $X(K,U)$, we consider the Dolbeault cohomology $H^{p,q}(X(K,U),E)=H^{q}(X(K,U),\Omega^{p}(E))$.
In this paper, we prove the following Hodge symmetry type result.
\begin{thm}\label{Mathe}
For any integers $p,q$,  we  have 
   \[\dim \bigwedge ^{p}\C^{s+t}\otimes \bigwedge^{q}\C^{s+t} = \sum_{E\in {\mathcal A}(U)}\dim H^{p,q}(X(K,U),E).\] 
\end{thm}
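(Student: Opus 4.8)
\section*{Proof proposal for Theorem \ref{Mathe}}

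The plan is to realize $X=X(K,U)$ as a solvmanifold $\Gamma\backslash G$, reduce each twisted Dolbeault cohomology $H^{p,q}(X,E)$ to a finite-dimensional complex of left-invariant forms with a character-twisted $\bar\partial$, and then sum over all $E\in\mathcal A(U)$ by means of a Koszul computation and the Vandermonde identity. First I recall from \cite{KV} that $X=\Gamma\backslash G$ with $\Gamma=U\ltimes\mathcal O_{K}$ and $G=\R^{s}\ltimes_{\phi}(\R^{s}\times\C^{t})$, where $\R^{s}=U\otimes\R$ acts on $\R^{s}\times\C^{t}=\mathcal O_{K}\otimes\R$ through the logarithms of the embeddings; the standard complex structure of $\mathbb H^{s}\times\C^{t}$ makes $G$ into a real Lie group with left-invariant complex structure. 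Writing $w_{1},\dots,w_{s}$ for the upper half-plane coordinates and $z_{1},\dots,z_{t}$ for the $\C^{t}$ coordinates, the forms $\alpha_{j}=dw_{j}/\mathrm{Im}(w_{j})$ and the $\beta_{k}$ obtained from $dz_{k}$ (rescaled by the character of the $\C$-factor) give a left-invariant $(1,0)$-coframe, and a direct computation of the Maurer--Cartan equations yields $\bar\partial\bar\alpha_{j}=0$ together with $\bar\partial\alpha_{j}=\tfrac1{2i}\bar\alpha_{j}\wedge\alpha_{j}$, $\bar\partial\beta_{k}=\sum_{j}\tfrac{\gamma_{kj}}{2i}\bar\alpha_{j}\wedge\beta_{k}$ and $\bar\partial\bar\beta_{k}=\sum_{j}\tfrac{\overline{\gamma_{kj}}}{2i}\bar\alpha_{j}\wedge\bar\beta_{k}$, where the $\gamma_{kj}$ are the entries of $CA^{-1}$, $A$ being the (invertible, by admissibility of $U$) matrix of real-embedding logarithms of a $\Z$-basis of $U$ and $C$ the matrix of complex-embedding logarithms. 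The decisive structural point is that \emph{every} $\bar\partial$ above introduces a factor among $\bar\alpha_{1},\dots,\bar\alpha_{s}$.

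Next I fix $E=E_{\rho}$ for $\rho\in\mathcal A(U)=\mathrm{Hom}(U,\C^{\ast})$ and extend $\rho$ to a character $\chi_{\lambda}\colon G\to\C^{\ast}$, $\chi_{\lambda}|_{U}=\rho$; since $X=\Gamma\backslash G$ and $\chi_{\lambda}$ is left-$\Gamma$-semi-invariant, $\chi_{\lambda}^{-1}$ is a smooth frame for $E_{\rho}$, and in this frame the invariant $E_{\rho}$-valued $(p,q)$-forms are exactly $\chi_{\lambda}\cdot\bigwedge^{p,q}\mathfrak g^{\ast}_{\C}$, with $\bar\partial$ becoming $\bar\partial_{\lambda}=\bar\partial-\sum_{j}\tfrac{\mu_{j}}{2i}\,\bar\alpha_{j}\wedge(\cdot)$ for $\mu=A^{-T}\lambda$. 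I then invoke the Dolbeault counterpart of the solvmanifold computation of \cite{KR} (the same input that yields Proposition \ref{Pco}) to identify $H^{p,q}(X,E_{\rho})$ with the cohomology of this finite invariant complex for the character attached to $\rho$.

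The computation of $H^{p,q}(\bigwedge^{\bullet,\bullet}\mathfrak g^{\ast}_{\C},\bar\partial_{\lambda})$ is now purely Koszul. On a monomial $\theta_{I}\wedge\bar\alpha_{J_{1}}\wedge\bar\beta_{J_{2}}$ (with $\theta_{I}$ the chosen $(1,0)$-part, $J_{1}\subseteq\{1,\dots,s\}$, $J_{2}\subseteq\{1,\dots,t\}$) the operator $\bar\partial_{\lambda}$ acts by wedging with $\sum_{j}\tfrac1{2i}(\mu_{I,J_{2}}-\mu)_{j}\,\bar\alpha_{j}$, where $\mu_{I,J_{2}}=\sum_{i\in I}\Gamma_{i}+\sum_{k\in J_{2}}\overline{\gamma_{k}}$ depends only on $I,J_{2}$, not on $J_{1}$. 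Thus, for each fixed $(I,J_{2})$, letting $J_{1}$ vary gives the Koszul complex $(\bigwedge^{\bullet}\C^{s},\,\ell\wedge-)$ with $\ell=\mu_{I,J_{2}}-\mu$: it is acyclic when $\ell\neq0$ and equals $\bigwedge^{\bullet}\C^{s}$ when $\ell=0$. Hence the only contributions come from the resonant pairs $\mu=\mu_{I,J_{2}}$, i.e.\ $\lambda=\lambda_{I,J_{2}}:=\sum_{i\in I}\mathbf m_{i}+\sum_{k\in J_{2}}\overline{\mathbf c_{k}}$ (with $\mathbf m_{i}$ the logarithmic weight of the $i$-th direction), each such pair contributing $\bigwedge^{q-|J_{2}|}\C^{s}$. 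As $\rho$ ranges over $\mathcal A(U)$, each pair $(I,J_{2})$ is resonant for precisely the bundle with $\chi_{\lambda_{I,J_{2}}}|_{U}=\rho$, so summing collects every invariant monomial exactly once and gives, by Vandermonde,
\[\sum_{E\in\mathcal A(U)}\dim H^{p,q}(X(K,U),E)=\binom{s+t}{p}\sum_{j=0}^{t}\binom{t}{j}\binom{s}{q-j}=\binom{s+t}{p}\binom{s+t}{q},\]
which is $\dim\bigwedge^{p}\C^{s+t}\otimes\bigwedge^{q}\C^{s+t}$, as claimed.

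I expect the main obstacle to lie in two linked points. The first is the reduction in the second paragraph: one must establish the Dolbeault analogue of \cite{KR}, namely that for the OT solvmanifold the invariant complex with the character-twisted $\bar\partial_{\lambda}$ actually computes $H^{p,q}(X,E_{\rho})$, which requires checking the relevant splitting/finiteness hypothesis and, crucially, pinning down the \emph{correct} extension $\lambda$ of $\rho$ (the cohomology of $\bar\partial_{\lambda}$ genuinely depends on $\lambda$, since extensions differing by the dual of the logarithmic lattice are conjugated by a non-holomorphic unit $\chi$). The second is the bijection used in the summation: one must verify that, for the bundle singled out by $\lambda_{I,J_{2}}$, this resonant value is exactly the distinguished extension furnished by the reduction theorem, so that every monomial is counted once and none is lost to a lattice shift. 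Both points hinge on the lattice structure of $U$ under the logarithmic embedding (invertibility of $A$ and the norm relation $\sum_{j}L_{\sigma_{j}}+2\sum_{k}L_{\sigma_{s+k}}=0$), and they run exactly parallel to the bookkeeping that proves the de Rham identity of Proposition \ref{Pco}; making this parallel precise is the heart of the argument.
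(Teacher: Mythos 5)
Your proposal has a genuine gap, and it sits exactly where you yourself suspect: the second paragraph, where you ``invoke the Dolbeault counterpart of the solvmanifold computation of \cite{KR}'' to identify $H^{p,q}(X,E_{\rho})$ with the cohomology of the finite-dimensional invariant complex with twisted differential $\bar\partial_{\lambda}$. No such counterpart is available. The theorem of \cite{KR} that yields Proposition \ref{Pco} is a statement about \emph{de Rham} cohomology with local systems; its Dolbeault analogue for solvmanifolds with left-invariant complex structures is not a formal consequence and is precisely the hard content of Theorem \ref{Mathe}. The paper is explicit about this (see the Remark in the introduction): OT-manifolds admit no non-zero holomorphic vector fields and are far from complex parallelizable, so the known Dolbeault computations for solvmanifolds (as in \cite{KV}) do not apply, and an extra analytic input is needed. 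Once you assume that reduction, your Koszul bookkeeping and the Vandermonde identity $\sum_{j}\binom{t}{j}\binom{s}{q-j}=\binom{s+t}{q}$ do reproduce the right count (it agrees with the paper's basis of harmonic representatives $\alpha_{I}\wedge\bar\alpha_{J}\wedge\beta_{K}\wedge\bar\beta_{L}\otimes v$), but the entire computation rests on the unproven identification, so the argument as written does not close.

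The paper's actual route avoids ever proving that reduction directly; it is a sandwich argument. One inequality (Proposition \ref{inj}) comes from Hodge theory: the invariant forms in $\bigwedge^{p}W_{1}\otimes\bigwedge^{q}W_{2}$ are $\bar\partial$-harmonic for the invariant metric $h_{G}$ (unimodularity of $G$ is what makes the Hodge star $\bar\ast$ preserve this space), so they \emph{inject} into $\bigoplus_{E}H^{p,q}$, giving the lower bound $\binom{s+t}{p}\binom{s+t}{q}\le\sum_{E}\dim H^{p,q}$. The reverse inequality is where the new input enters: the paper runs the Leray spectral sequence of the torus fibration $\pi:\Gamma\backslash G\to\Lambda\backslash\R^{s}$ and uses \cite[Lemma 4.3]{OTH} (Otiman--Toma's analysis on Cousin groups) to identify $R^{b}\pi_{\ast}\Omega^{p}$ as a local system, locally $\bigwedge^{p}\C^{s+t}\otimes\bigwedge^{b}\C^{t}$, attached to a diagonal representation of $\Lambda$. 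Summing the $E_{2}$-terms over all flat line bundles then gives exactly $\binom{s+t}{p}\binom{s+t}{q}$, which bounds $\sum_{E}\dim H^{p,q}$ from above; comparison with the lower bound forces degeneration at $E_{2}$ and equality. If you want to rescue your approach, you would need to either prove the invariant-complex reduction for twisted Dolbeault cohomology of these solvmanifolds (which would be a new theorem, and would also have to resolve your own worry about which extension $\lambda$ of $\rho$ is the correct one), or replace that step by an upper-bound mechanism such as the one the paper borrows from \cite{OTH}.
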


More precisely, we can give explicit harmonic representatives of $\bigoplus_{{\mathcal A}(U)} H^{p,q}(X(K,U),E)$.
In particular, we know the vanishing or non-vanishing of the cohomology $ H^{p,q}(X(K,U),E)$ for any $E\in {\mathcal A}(U)$.
For a positive integer $s$, we denote  $[s]=\{1,2,\dots, s\}$ and we call a subset  in $[s]$ with the natural order a multi-index.

\begin{thm}\label{vanan}
Let $E$ be a flat complex line bundle over an OT-manifold $X(K,U)$ corresponding to $\rho\in {\rm Hom}(U,\C^{\ast})$.
Then
 \[
H^{p,q}(X(K,U),E)\not=0\]
 if and only if   for some multi-indices  $I\subset [s], K,L\subset [t]$   with $\vert I\vert+\vert K\vert=p$ and $\vert L \vert\le q$,  we have
 \[\rho(u)=\prod_{i\in I}\sigma_{i}(u)\prod_{k\in K}\sigma_{s+k}(u)\prod_{l\in L}\sigma_{s+t+l}(u)\]
  for any $u\in U$.  
   If 
  \[\rho(u)=\prod_{i\in I}\sigma_{i}(u)\prod_{k\in K}\sigma_{s+k}(u)\prod_{l\in L}\sigma_{s+t+l}(u)\]
   for any $u\in U$,
  then  we have 
   \[\dim H^{p,q}(X(K,U),E)\ge \left( \begin{array}{cc}
s  \\
q-\vert L \vert
\end{array}\right)
\]
where $ \left( \begin{array}{cc}
n  \\
k
\end{array}\right)$ means the number of $k$-combinations.

\end{thm}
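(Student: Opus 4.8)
The plan is to deduce Theorem \ref{vanan} directly from the explicit harmonic representatives that underlie Theorem \ref{Mathe}, so that the whole argument becomes a bookkeeping of bidegrees and characters. First I would use the standard presentation of $X(K,U)$ as a solvmanifold $G/\Gamma$ with universal cover $\mathbb{H}^s\times\mathbb{C}^t$, the lattice $\mathcal O_K$ acting by the translations $z_i\mapsto z_i+\sigma_i(a)$, $w_j\mapsto w_j+\sigma_{s+j}(a)$ and $U$ acting diagonally by the embeddings. On $G$ I introduce the left-invariant $(1,0)$-coframe
\[
\phi_i=\frac{dz_i}{\mathrm{Im}\,z_i}\ (i\in[s]),\qquad \psi_j=e^{-\nu_j(\mathbf t)}\,dw_j\ (j\in[t]),
\]
where $\mathbf t=(\log\mathrm{Im}\,z_1,\dots,\log\mathrm{Im}\,z_s)$, $\ell(u)=(\log\sigma_1(u),\dots,\log\sigma_s(u))$, and $\nu_j$ is the $\mathbb R$-linear functional with $\nu_j(\ell(u))=\log\sigma_{s+j}(u)$; correspondingly $\bar\nu_m(\ell(u))=\log\sigma_{s+t+m}(u)$. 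Every invariant form descends to $X$, and in the finite model computing Theorem \ref{Mathe} an $E_\rho$-valued form is a combination of $e^{\mu(\mathbf t)}\,\phi_I\wedge\psi_J\wedge\bar\phi_S\wedge\bar\psi_M$ with $I,S\subset[s]$, $J,M\subset[t]$, and $\mu$ subject to the equivariance condition $e^{\mu(\ell(u))}=\rho(u)$.

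The decisive step is the structure of $\bar\partial$ on this coframe. A short computation gives
\[
\bar\partial\phi_i=\tfrac{i}{2}\phi_i\wedge\bar\phi_i,\quad \bar\partial\psi_j=-\tfrac{i}{2}\langle\nu_j,\bar\phi\rangle\wedge\psi_j,\quad \bar\partial\bar\psi_m=-\tfrac{i}{2}\langle\bar\nu_m,\bar\phi\rangle\wedge\bar\psi_m,\quad \bar\partial\bar\phi_l=0,
\]
and $\bar\partial e^{\mu(\mathbf t)}=\tfrac{i}{2}e^{\mu(\mathbf t)}\langle\mu,\bar\phi\rangle$, where $\langle\alpha,\bar\phi\rangle=\sum_l\alpha^{(l)}\bar\phi_l$. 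I would then observe that $\bar\partial$ never produces a $\bar\psi$ and leaves the $\phi$-, $\psi$- and $\bar\psi$-supports unchanged, so the model splits into blocks indexed by $(I,J,M)$; by the Leibniz rule the differential on each block is the Koszul differential
\[
F\longmapsto \tfrac{i}{2}\,\langle W,\bar\phi\rangle\wedge F,\qquad W=\mu-\mathbf 1_I-\sum_{j\in J}\nu_j-\sum_{m\in M}\bar\nu_m,
\]
on $\Lambda^\bullet\langle\bar\phi_1,\dots,\bar\phi_s\rangle$, with $\mathbf 1_I=\sum_{i\in I}t_i$ (the self-terms of the $\phi_i$ supply the $-\mathbf 1_I$).

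Since wedging with a fixed covector is acyclic when $W\neq0$ and is the zero map when $W=0$, a block carries cohomology exactly when $W=0$, i.e. when $\mu=\mathbf 1_I+\sum_{J}\nu_j+\sum_{M}\bar\nu_m$; evaluating on $\ell(u)$ and exponentiating this is precisely $\rho(u)=\prod_{i\in I}\sigma_i(u)\prod_{j\in J}\sigma_{s+j}(u)\prod_{m\in M}\sigma_{s+t+m}(u)$. In that case the surviving classes, after multiplying through by $e^{\mu}$, are represented by
\[
dz_I\wedge dw_J\wedge\Big(\bigwedge_{l\in S}\tfrac{d\bar z_l}{\mathrm{Im}\,z_l}\Big)\wedge d\bar w_M\qquad(S\subset[s]),
\]
which are $\bar\partial$-harmonic, have bidegree $(\,|I|+|J|,\ |S|+|M|\,)$, and all carry the single character above because the $\bar\phi_l$ are invariant. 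This proves the ``if'' direction, and fixing one such triple with $|I|+|J|=p$ and $|M|\le q$ and letting $S$ range over the $(q-|M|)$-subsets of $[s]$ yields the asserted $\binom{s}{\,q-|M|\,}$ independent classes.

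Finally I would close the argument with Theorem \ref{Mathe}. Summing the block counts over all admissible $(I,J,M)$ with $|I|+|J|=p$ gives, by Vandermonde,
\[
\sum_{|I|+|J|=p}\binom{s}{\,q-|M|\,}=\binom{s+t}{p}\binom{s+t}{q}=\sum_{E\in\mathcal A(U)}\dim H^{p,q}(X(K,U),E),
\]
so the forms above already exhaust the entire right-hand side; hence no cohomology can occur over a character not of the displayed form, giving the ``only if'' direction, and the lower bound $\binom{s}{\,q-|L|\,}$ becomes the exact dimension whenever the embeddings entering $\rho$ are multiplicatively independent on $U$. The single non-formal ingredient is the passage from the full Dolbeault complex to this finite invariant model—equivalently, the vanishing of all nonzero Fourier modes along the $\mathcal O_K$-torus fibres, which is forced by the hyperbolicity of the $U$-action; I import it precisely through Theorem \ref{Mathe}, leaving only the elementary Koszul and Vandermonde computations above.
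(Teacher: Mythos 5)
Your proposal reproduces, in outline, the paper's own argument: the paper proves Theorem \ref{vanan} by exhibiting the invariant forms $\alpha_{I}\wedge \bar\alpha_{J}\wedge \beta_{K}\wedge \bar{\beta}_{L} \otimes v_{e^{\Psi_{IK\bar{L}}(x)}}$ as harmonic representatives (Proposition \ref{inj}) and then using the total dimension count over all flat line bundles (Theorem \ref{Dol}, equivalently Theorem \ref{Mathe}) to force these representatives to exhaust $\bigoplus_{E} H^{p,q}$, so that each individual $H^{p,q}(X(K,U),E)$ is spanned by exactly the representatives whose character matches $E$. Your two steps --- ``if'' via explicit harmonic forms, ``only if'' via the Vandermonde count $\sum_{M}\binom{s}{q-|M|}$ summed over pairs with $|I|+|J|=p$ matched against Theorem \ref{Mathe} --- are the same logic, and the block/character bookkeeping (the condition $W=0$ translating into $\rho=\prod_{i\in I}\sigma_{i}\prod\sigma_{s+j}\prod\sigma_{s+t+k}$ on $U$) is correct.

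The one genuine gap is the parenthetical assertion that your representatives are $\bar\partial$-harmonic. Your Koszul computation only shows that they are $\bar\partial$-closed, and closedness alone gives neither nonvanishing nor linear independence of their classes; note that for Dolbeault cohomology with values in these bundles there is no general theorem (unlike the de Rham results of \cite{KR}) asserting that the finite invariant model injects into cohomology, which is exactly why the paper has to argue. Both your lower bound and your exhaustion argument rest on this independence. In the paper it is the content of Proposition \ref{inj}: with the invariant metric one computes
\[
\bar\ast \left( \alpha_{I}\wedge \bar\alpha_{J}\wedge \beta_{K}\wedge \bar{\beta}_{L} \otimes v_{e^{\Psi_{IK\bar{L}}(x)}}\right)=\pm\alpha_{\check{I}}\wedge \bar\alpha_{\check{J}}\wedge \beta_{\check{K}}\wedge \bar{\beta}_{\check{L}}\otimes v_{e^{-\Psi_{IK\bar{L}}(x)}},
\]
and the key point, which your proposal never touches, is the unimodularity of $G$, i.e.\ $\exp(\Psi_{[s][t]\overline{[t]}}(x))=1$, which gives $\exp(\Psi_{\check{I}\check{K}\overline{\check{L}}}(x))=\exp(-\Psi_{IK\bar{L}}(x))$; this is what guarantees that $\bar\ast$ maps the invariant model into itself, so that the dual form is again $\bar\partial$-closed and the original form is co-closed, hence harmonic. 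Once you add this verification your argument is complete and coincides with the paper's. A smaller point: your closing sentence claims to import ``the passage from the full Dolbeault complex to the finite invariant model'' through Theorem \ref{Mathe}, but Theorem \ref{Mathe} is only the numerical identity; your proof never actually needs the model to compute each bundle's cohomology --- it needs precisely harmonicity plus the count, which is why the missing harmonicity argument is the load-bearing step.
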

Obviously we have the following consequence.
\begin{cor}
Let $E$ be a flat complex line bundle over an OT-manifold $X(K,U)$ corresponding to $\rho\in {\rm Hom}(U,\C^{\ast})$.
Then, $E$ admits a non-zero holomorphic section if and only if $\rho$ is trivial.
\end{cor}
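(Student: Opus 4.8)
The plan is to reduce the corollary to the case $p=q=0$ of Theorem \ref{vanan}. By definition a global holomorphic section of $E$ is an element of $H^{0}(X(K,U),\Omega^{0}(E))=H^{0,0}(X(K,U),E)$, since $\Omega^{0}(E)$ is exactly the sheaf of holomorphic sections of $E$. Thus $E$ admits a non-zero holomorphic section if and only if $H^{0,0}(X(K,U),E)\not=0$, and it suffices to determine when this Dolbeault group is non-zero.

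Next I would specialize the non-vanishing criterion of Theorem \ref{vanan} to $p=q=0$. That criterion states that $H^{0,0}(X(K,U),E)\not=0$ precisely when there exist multi-indices $I\subset[s]$ and $K,L\subset[t]$ with $\vert I\vert+\vert K\vert=0$ and $\vert L\vert\le 0$ realizing $\rho$ as the associated product of embeddings. Since the sizes of multi-indices are non-negative, these constraints force $I=K=L=\emptyset$, so the defining product is empty and $\rho(u)=1$ for every $u\in U$; that is, $\rho$ is the trivial homomorphism. Conversely, if $\rho$ is trivial then all three multi-indices may be taken empty, the criterion of Theorem \ref{vanan} is satisfied, and $H^{0,0}(X(K,U),E)\not=0$.

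I expect no genuine obstacle here, as the corollary is a direct specialization of the previously established non-vanishing result. The only points meriting a word of care are the identification of holomorphic sections with the $(0,0)$-Dolbeault group and the convention that the empty-index product equals the constant function $1$. Concretely, when $\rho$ is trivial the line bundle $E$ is the trivial bundle $\Omega^{0}$, whose constant section $1$ is a non-zero holomorphic section; this gives the converse implication directly, without invoking the full strength of Theorem \ref{vanan}.
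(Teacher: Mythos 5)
Your proof is correct and takes essentially the same route as the paper, which states this corollary as an immediate consequence of Theorem \ref{vanan}: your argument just spells out the specialization to $p=q=0$, where the constraints $\vert I\vert+\vert K\vert=0$ and $\vert L\vert\le 0$ force $I=K=L=\emptyset$, so the product defining $\rho$ is empty and $\rho$ must be trivial, while triviality of $\rho$ conversely satisfies the criterion (or, as you note, directly yields the constant section of the trivial bundle). The identification of holomorphic sections with $H^{0,0}(X(K,U),E)$ and the empty-product convention are exactly the points left implicit in the paper.
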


As a corollary of Theorem \ref{vanan}, we also obtain 
\cite[Theorem 3.1]{APV} (see Remark \ref{APV}).

\begin{rem}
Our main results are obtained by using the solvmanifold presentations of  OT manifolds.
We give statements in terms of solvmanifolds (Proposition \ref{III}, Theorem \ref{Dol} and  Theorem \ref{Van}) implying Proposition \ref{Pco}, Theorem  \ref{Mathe} and Theorem \ref{vanan}.

We notice that  Theorem \ref{Mathe} is deeper than Proposition \ref{Pco}.
In \cite{KV}, we also compute the cohomology of complex parallelizable solvmanifolds with values in holomorphic vector bundles.
OT-manifolds do not admit non-zero holomorphic vector field (\cite[Proposition 2.5]{OT}) and they are far from complex parallelizable solvmanifolds.
To prove Theorem \ref{Mathe}, we use the result in \cite{OTH} given by the analysis on Cousin groups.
\end{rem}

\section{Cohomology of solvmanifolds}
Let $G$ be a simply connected  real solvable Lie group with a lattice $\Gamma$ and $\g$ the Lie algebra of $G$.
Denote by $A^{\ast}(\Gamma \backslash G)$ the de Rham complex of the solvmanifold  $\Gamma\backslash G$.
We identify the de Rham complex $A^{\ast}(\Gamma\backslash G)$ of $\Gamma\backslash G$ with the subcomplex of  the de Rham complex $A^{\ast}( G)$ of  $G$ consisting of the left-$\Gamma$-invariant differential forms.
Consider the cochain complex $\bigwedge \g^{\ast}$ of the Lie algebra $\g$.
We identify $\bigwedge \g^{\ast}$ with the subcomplex of  the de Rham complex $A^{\ast}( G)$ of  $G$ consisting of the  left-$G$-invariant differential forms.
Hence, we consider $\bigwedge \g^{\ast}$ as a subcomplex of  $A^{\ast}(\Gamma\backslash G)$.

Let $N$ be the nilradical (i.e. maximal connected nilpotent normal subgroup)  of $G$.
Denote ${\mathcal A}_{(G,N)}=\{\alpha\in {\rm Hom}(G,\C^{\ast}) \vert \alpha_{\vert_{N}}=1\}$.
For $\alpha\in {\mathcal A}_{(G,N)}$, we consider  the $\Gamma$-action on $G\times \C$ so that for $\gamma\in \Gamma$  and $(g,c)\in G\times \C$, $\gamma\cdot (g,c)=(\gamma g, \alpha(\gamma)c)$.
Define the flat complex line bundle $E_{\alpha}=\Gamma\backslash(G\times \C) $ over the solvmanifold $\Gamma \backslash G$.
We have the global section $v_{\alpha}$ induced by the section $(g,\alpha(g))$ of $G\times \C$.
Denote by $A^{\ast}(\Gamma \backslash G, E_{\alpha})$ the de Rham complex with values in the line bundle $E_{\alpha}$.
Since $v_{\alpha}$ trivializes $E_{\alpha}$, we have $A^{\ast}(\Gamma \backslash G, E_{\alpha})=A^{\ast}(\Gamma \backslash G)\otimes \langle v_{\alpha}\rangle$ and $d v_{\alpha}=\alpha^{-1}d\alpha v_{\alpha}$.
We have the subcomplex $\bigwedge \g^{\ast}_{\C}\otimes  \langle v_{\alpha}\rangle\subset A^{\ast}(\Gamma \backslash G, E_{\alpha})=A^{\ast}(\Gamma \backslash G)\otimes \langle v_{\alpha}\rangle$.
The cochain complex  $\bigwedge \g^{\ast}_{\C}\otimes  \langle v_{\alpha}\rangle$ is the cochain complex of the Lie algebra $\g$ associated with the representation $\alpha\in {\rm Hom}(G,\C^{\ast})$.

We define  ${\mathcal A}_{(G,N)}(\Gamma)$ by  the set $\{E_{\alpha}\}$ of all the isomorphism classes of flat line bundles $E_{\alpha}$ associated with $\alpha\in {\mathcal A}_{(G,N)}$.
This set  is identified with the set  $\{\alpha_{\vert\Gamma}\in {\rm Hom}(\Gamma,\C^{\ast})\vert \alpha\in {\mathcal A}_{(G,N)}\}$.
Consider the direct sum
\[\bigoplus_{E_{\alpha}\in {\mathcal A}_{(G,N)}(\Gamma)}A^{\ast}(\Gamma \backslash G, E_{\alpha}).
\]
By the natural isomorphisms $E_{\alpha}\otimes E_{\beta}\cong E_{\alpha\beta}$ for $\alpha,\beta\in {\mathcal A}_{(G,N)}$,
this direct sum is a differential graded algebra. 
By the above argument, we have the inclusion 
\[\bigoplus_{\alpha\in {\mathcal A}_{(G,N)}}\bigwedge \g^{\ast}\otimes  \langle v_{\alpha}\rangle
\subset \bigoplus_{E_{\alpha}\in {\mathcal A}_{(G,N)}(\Gamma)}A^{\ast}(\Gamma \backslash G, E_{\alpha}).
\]

We have a simply connected nilpotent subgroup $C\subset G$ such that $G=C\cdot N$ (see \cite[Proposition 3.3]{dek}).
Since $C$ is nilpotent, the map 
\[\Phi:C\ni c \mapsto  ({\rm Ad}_{c})_{s}\otimes \alpha(c) \in {\rm Aut}\left(\bigwedge {\frak g}^{\ast}_{\C}\otimes \langle v_{\alpha}\rangle\right)\]
is a homomorphism where $({\rm Ad}_{c})_{s}$ is the semi-simple part of the Jordan decomposition of the adjoint operator.
We denote by 
\[\left(\bigwedge {\frak g}^{\ast}_{\C}\otimes \langle v_{\alpha}\rangle\right)^{\Phi(C)}
\]
the subcomplex of $\bigwedge {\frak g}^{\ast}_{\C}\otimes \langle v_{\alpha}\rangle$ consisting of the $\Phi(C)$-invariant elements.
\cite[Theorem 1.4, Lemma 5.2]{KR} says that the inclusion
\[\bigoplus_{\alpha\in {\mathcal A}_{(G,N)}}\left(\bigwedge {\frak g}^{\ast}_{\C}\otimes \langle v_{\alpha}\rangle\right)^{\Phi(C)}\subset  \bigoplus_{E_{\alpha}\in {\mathcal A}_{(G,N)}(\Gamma)}A^{\ast}(\Gamma \backslash G, E_{\alpha})
\]
induces a cohomology isomorphism.

We have a basis $X_{1},\dots,X_{n}$ of $\g_{\C}$ such that $({\rm Ad}_{c})_{s}={\rm diag} (\alpha_{1}(c),\dots,\alpha_{n}(c))$ for all $c\in C$.
Let $x_{1},\dots, x_{n}$ be the basis of $\g_{\C}^{\ast}$ which is dual to $X_{1},\dots ,X_{n}$.
By $G=C\cdot N$, we have $G/N=C/C\cap N$ and hence we have ${\mathcal A}_{(G,N)}={\mathcal A}_{C,C\cap N}=\{\alpha\in {\rm Hom}(C,\C^{\ast}) \vert \alpha_{\vert_{C\cap N}}=1\}$.
For a multi-index $I=\{i_{1},\dots ,i_{p}\}\subset [n]$ we write $x_{I}=x_{i_{1}}\wedge\dots \wedge x_{i_{p}}$,  and $\alpha_{I}=\alpha_{i_{1}}\cdots \alpha_{i_{p}}$.
We consider the basis 
\[\{ x_{I} \otimes v_{\alpha}\}_{I\subset [n]}
\]
of $\bigwedge {\frak g}^{\ast}_{\C}\otimes \langle v_{\alpha}\rangle$.
Since the action
\[\Phi:C\to{\rm Aut}\left(\bigwedge {\frak g}^{\ast}_{\C}\otimes \langle v_{\alpha}\rangle\right)\]
is  the semi-simple part of $({\rm Ad}\otimes \alpha)_{\vert_C}$, we have
\[\Phi(a)(x_{I} \otimes v_{\alpha})
=\alpha^{-1}_{I} \alpha x_{I}\otimes v_{\alpha}.
\]

Hence we have
\begin{multline*}
\bigoplus_{\alpha\in {\mathcal A}_{(G,N)}}\left(\bigwedge {\frak g}^{\ast}_{\C}\otimes \langle v_{\alpha}\rangle\right)^{\Phi(C)}
=\langle x_{I}\otimes v_{\alpha_{I}} \rangle_{I\subset [n]}
=\bigwedge \langle x_{1}\otimes v_{\alpha_{1}},\dots, x_{n} \otimes v_{\alpha_{n}} \rangle .
\end{multline*}
It is known that the differential graded algebra 
\[\bigwedge \langle x_{1}\otimes v_{\alpha_{1}},\dots, x_{n} \otimes v_{\alpha_{n}} \rangle
\]
is identified with  the cochain complex of certain nilpotent Lie algebra  determined by the solvable Lie algebra $\g$ (see \cite[Remark 4]{KR} and \cite{KM}).

\section{de Rham and Dolbeault cohomology of certain solvmanifolds}

Let $s,t$ be positive integers.
We consider the semi-direct product $G=\R^{s}\ltimes_{\phi} (\R^{s}\oplus \C^{t})$ of real abelian Lie groups $\R^{s}$ and $\R^{s}\oplus \C^{t}$ given by the homomorphism $\phi:\R^{s}\to {\rm Aut}(\R^{s}\oplus \C^{t})$ so that 
\[\phi(x)(y, z)=(e^{x_{1}}y_{1},\dots ,e^{x_{s}}y_{s}, e^{\psi_{1}(x)}z_{1},\dots,  e^{\psi_{t}(x)}z_{t})
\]
for $x=(x_{1},\dots ,x_{s})\in \R^{s}$, $(y, z)=(y_{1},\dots ,y_{s}, z_{1},\dots,  z_{t})\in \R^{s}\oplus \C^{t}$ and some non-zero linear 
functions $\psi_{1},\dots, \psi_{t}: \R^{s}\to  \C$.
$G$ is a simply connected solvable Lie group.
Suppose we have lattices $\Lambda\subset \R^{s}$ and $\Delta \subset \R^{s}\oplus \C^{t}$ so that for every $\lambda\in \Lambda $ the automorphism  $\phi(\lambda)$ on $\R^{s}\oplus \C^{t}$ preserves $\Delta$.
Then the subgroup $\Gamma=\Lambda\ltimes_{\phi} \Delta\subset G$ is a cocompact discrete subgroup of $G$.
Consider the solvmanifold $\Gamma\backslash G$.

We have a basis 
\[ dx_{1},\dots, dx_{s},e^{-x_{1}}dy_{1},\dots, e^{-x_{s}}dy_{s}, e^{-\psi_{1}(x)}dz_{1},\dots,  e^{-\psi_{t}(x)}dz_{t}, e^{-\bar\psi_{1}(x)}d\bar{z}_{1},\dots , e^{-\bar\psi_{t}(x)}d\bar{z}_{t}\]
 of $\g^{\ast}_{\C}=\g^{\ast}\otimes \C$.

On  $G=\R^{n}\ltimes_{\phi} (\R^{s}\oplus \C^{t})$, the nilradical $N$ is $\R^{s}\oplus \C^{t}$ and we  take the subgroup $C=\R^{s}\subset G$ so that $C\cdot N=G$.
We apply the last section to the solvmanifold $\Gamma\backslash G$.
By the last section, defining
\[V= \left\langle\begin{array}{cccc} dx_{1},\dots, dx_{s}, \\
 e^{-x_{1}}dy_{1}\otimes v_{e^{x_{1}}},\dots,e^{-x_{s}}dy_{s}\otimes v_{e^{x_{s}}}, \\
   e^{-\psi_{1}(x)}dz_{1}\otimes v_{e^{\psi_{1}(x)}},\dots ,  e^{-\psi_{t}(x)}dz_{t}\otimes v_{e^{\psi_{t}(x)}} ,\\
    e^{-\bar\psi_{1}(x)}d\bar{z}_{1}\otimes v_{e^{\bar\psi_{1}(x)}}, \dots,  e^{-\bar\psi_{t}(x)}d\bar{z}_{t}\otimes v_{e^{\bar\psi_{t}(x)}}  \end{array} \right\rangle
\]
where we regard $ dx_{1},\dots, dx_{s}$ as $1$-forms with values in the trivial line bundle,
 we have the inclusion
\[\bigwedge V \subset \bigoplus_{E_{\alpha}\in {\mathcal A}_{(G,N)}(\Gamma)}A^{\ast}(\Gamma \backslash G, E_{\alpha})
\]
which induces a cohomology isomorphism.
We notice that we can identify ${\mathcal A}_{(G,N)}$ with the set ${\rm Hom}(\R^{s}, \C^{\ast})$ of Lie group homomorphisms from $\R^{s}$ to $\C^{\ast}$ and the set ${\mathcal A}_{(G,N)}(\Gamma)$ is equal to  the set  of isomorphism classes of flat complex line bundles over  $\Gamma \backslash G$ given by homomorphisms in ${\rm Hom}(\Lambda,\C^{\ast})$.
We can easily check that the differential on $V$ is $0$.
 Hence we have the following:
\begin{prop}\label{III}
   We have an isomorphism
 \[\bigwedge V \cong  \bigoplus_{E_{\alpha}\in {\mathcal A}_{(G,N)} }H^{\ast}(\Gamma \backslash G, E_{\alpha}).\]   
\end{prop}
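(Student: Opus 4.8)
The plan is to deduce Proposition \ref{III} directly from the machinery of Section 2 by exhibiting $\bigwedge V$ as the $\Phi(C)$-invariant subcomplex and then checking that its differential vanishes identically. By \cite[Theorem 1.4, Lemma 5.2]{KR}, the inclusion of $\bigoplus_{\alpha}\left(\bigwedge \g^{\ast}_{\C}\otimes \langle v_{\alpha}\rangle\right)^{\Phi(C)}$ into $\bigoplus_{E_{\alpha}}A^{\ast}(\Gamma\backslash G, E_{\alpha})$ is a quasi-isomorphism, and Section 2 identifies this invariant subcomplex with $\bigwedge\langle x_{1}\otimes v_{\alpha_{1}},\dots,x_{n}\otimes v_{\alpha_{n}}\rangle$. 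So it suffices to match the generators of $V$ with the $x_{i}\otimes v_{\alpha_{i}}$ and to show that the cohomology of $\bigwedge V$ is $\bigwedge V$ itself.

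First I would pin down the characters $\alpha_{i}\in {\rm Hom}(C,\C^{\ast})$ attached to each left-invariant coframe element, where $C=\R^{s}$ acts through $\phi$. The forms $dx_{1},\dots,dx_{s}$ are dual to the $C$-directions, on which ${\rm Ad}$ is trivial, so their characters are $1$ and they sit in the trivial bundle. For the nilradical directions, $\phi(x)$ scales $y_{i}$ by $e^{x_{i}}$, $z_{j}$ by $e^{\psi_{j}(x)}$ and $\bar z_{j}$ by $e^{\bar\psi_{j}(x)}$; reading off the semisimple eigenvalues of $({\rm Ad}_{c})_{s}$ on the corresponding basis vectors gives the characters $e^{x_{i}}$, $e^{\psi_{j}(x)}$, $e^{\bar\psi_{j}(x)}$ respectively (one checks e.g. $[\,\text{the }x_{i}\text{-vector},\,E_{y_{i}}]=E_{y_{i}}$ from the structure equation $d(e^{-x_{i}}dy_{i})=-dx_{i}\wedge(e^{-x_{i}}dy_{i})$). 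Matching $x_{I}\otimes v_{\alpha_{I}}$ for singletons then reproduces exactly the generators $e^{-x_{i}}dy_{i}\otimes v_{e^{x_{i}}}$, $e^{-\psi_{j}(x)}dz_{j}\otimes v_{e^{\psi_{j}(x)}}$ and $e^{-\bar\psi_{j}(x)}d\bar z_{j}\otimes v_{e^{\bar\psi_{j}(x)}}$ listed in $V$, so that $\bigwedge V$ is precisely the invariant subcomplex.

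Next I would verify that the differential kills every generator. For the trivial-bundle forms $dx_{i}$ this is immediate. For a generator $\omega\otimes v_{\alpha}$ with $\omega=e^{-c(x)}d\zeta$ and $\alpha=e^{c(x)}$ (where $c$ is $x_{i}$, $\psi_{j}(x)$ or $\bar\psi_{j}(x)$, and $\zeta$ the matching coordinate), the Lie-algebra differential gives $d\omega=-dc\wedge\omega=\omega\wedge dc$, while the connection term coming from $dv_{\alpha}=\alpha^{-1}d\alpha\,v_{\alpha}=dc\,v_{\alpha}$ contributes $(-1)^{\deg \omega}\omega\wedge dc=-\omega\wedge dc$. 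These cancel, so $d(\omega\otimes v_{\alpha})=0$. Since $d$ is a derivation and vanishes on all generators, it vanishes on all of $\bigwedge V$, whence $H^{\ast}(\bigwedge V)=\bigwedge V$. Combining this with the quasi-isomorphism above yields the asserted isomorphism $\bigwedge V\cong\bigoplus_{E_{\alpha}\in {\mathcal A}_{(G,N)}}H^{\ast}(\Gamma\backslash G,E_{\alpha})$.

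The step I expect to require the most care is the bookkeeping in the second paragraph: one must be certain that the subscripts $v_{\alpha}$ recorded in $V$ are exactly the semisimple eigenvalues of $({\rm Ad}_{c})_{s}$ and not their inverses, since it is precisely this matching that produces $\alpha^{-1}d\alpha=dc$ and hence the cancellation of $d\omega$ in the final step. (Had the character been $e^{-c(x)}$ instead, the two terms would add rather than cancel.) Everything else is a routine derivation computation, and no input beyond \cite{KR} and the explicit structure equations of $\g$ is needed.
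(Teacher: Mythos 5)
Your proposal is correct and follows essentially the same route as the paper: both invoke the quasi-isomorphism of \cite{KR} from Section 2 to identify $\bigwedge V$ with the $\Phi(C)$-invariant subcomplex $\bigwedge \langle x_{1}\otimes v_{\alpha_{1}},\dots, x_{n}\otimes v_{\alpha_{n}}\rangle$, and then observe that the differential vanishes on the generators of $V$, so that $\bigwedge V$ equals its own cohomology. The only difference is one of detail: you spell out the character bookkeeping (that the subscript of $v_{\alpha}$ is the adjoint eigenvalue on the vector, not its inverse) and the sign cancellation $d\omega\otimes v_{\alpha}+(-1)^{\deg\omega}\omega\wedge dv_{\alpha}=0$, which the paper compresses into the remark that the differential on $V$ is ``easily checked'' to be zero.
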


Regarding $1$-forms 
\[\alpha_{1}= dx_{1}+\sqrt{-1}e^{-x_{1}}dy_{1},\dots, \alpha_{s}=dx_{s}+\sqrt{-1}e^{-x_{s}}dy_{s}, \beta_{1}= e^{-\psi_{1}(x)}dz_{1},\dots ,\beta_{t}= e^{-\psi_{t}(x)}dz_{t}\]
as $(1,0)$-forms on $\Gamma\backslash G$, 
we have a left-$G$-invariant almost complex structure $J$ on $\Gamma\backslash G$.
We can easily check that $J$ is integrable.
We consider the Dolbeault complex $(A^{\ast,\ast} (\Gamma\backslash G), \bar\partial)$ of the complex manifold $(\Gamma\backslash G, J)$.
We have 
\[\bar\partial\alpha_{i}=-\frac{1}{2} \bar\alpha_{i}\wedge \alpha_{i},\qquad \bar\partial\bar{\alpha}_{i}=0,\qquad \bar\partial\beta_{i}=-\frac{1}{2}\psi_{i}(\bar\alpha)\wedge \beta_{i}\,\, {\rm and}\, \, \bar\partial\bar{\beta}_{i}=-\frac{1}{2}\bar{\psi}_{i}(\bar\alpha)\wedge \bar{\beta}_{i}
\]
where $\psi_{i}(\bar\alpha)$ and $\bar{\psi}_{i}(\bar\alpha)$ are $(0,1)$-forms associated with linear functions $\psi_{i}(x)$ and $\bar{\psi}_{i}(x)$ by putting $x=\bar\alpha=(\bar\alpha_{1},\dots, \bar\alpha_{s})$.

\begin{rem}\label{tan}
We consider the holomorphic tangent bundle $\Theta$ and holomorphic cotangent bundle $\Omega^{1}$ of $(\Gamma\backslash G, J)$.
Then,  $\alpha_{1},\dots,\alpha_{s},\beta_{1},\dots,\beta_{t}$ is a global ${\mathcal C}^{\infty}$-frame of $\Omega^{1}$.
Hence we have an isomorphism 
\[\Omega^{1}\cong E_{e^{-x_{1}}}\oplus \dots \oplus E_{e^{-x_{s}}}\oplus E_{e^{-\psi_{1}(x)}}\oplus \dots \oplus E_{e^{-\psi_{t}(x)}}\]
 of holomorphic vector bundles.
By this, we also have
\[\Theta \cong E_{e^{x_{1}}}\oplus \dots \oplus E_{e^{x_{s}}}\oplus E_{e^{\psi_{1}(x)}}\oplus \dots \oplus E_{e^{\psi_{t}(x)}}.\]
\end{rem}

For any  $e^{\Psi(x)}\in {\mathcal A}_{(G,N)}={\rm Hom}(\R^{s}, \C^{\ast})$ associated with a complex valued linear function $\Psi(x)$ on $\R^{s}$, we regard the flat line bundle $E_{e^{\Psi(x)}}$
as a holomorphic line bundle over the complex manifold $(\Gamma\backslash G, J)$.
We have $\bar\partial v_{e^{\Psi(x)}}=\frac{1}{2}\Psi(\bar\alpha) \otimes v_{e^{\Psi(x)}}$.
Define 
\begin{equation}\label{WW}
W_{1}= \left\langle\begin{array}{cccc} \alpha_{1}\otimes v_{e^{x_{1}}},\dots,  \alpha_{s}\otimes v_{e^{x_{s}}},\\
   \beta_{1}\otimes v_{e^{\psi_{1}(x)}},\dots ,  \beta_{t}\otimes v_{e^{\psi_{t}(x)}}  \end{array} \right\rangle ,
   W_{2}= \left\langle\begin{array}{cccc}
 \bar\alpha_{1},\dots,\bar\alpha_{s}, \\
   \bar\beta_{1}\otimes v_{e^{\bar\psi_{1}(x)}}, \dots,  \bar\beta_{t}\otimes v_{e^{\bar\psi_{t}(x)}}  \end{array} \right\rangle
\end{equation}
where we regard $ \bar\alpha_{1},\dots,\bar\alpha_{s}$ as $1$-forms with values in the trivial line bundle.
We consider the subspace 
\[\bigwedge^{p} W_{1}\otimes \bigwedge^{q} W_{2} \subset \bigoplus_{E_{\alpha}\in {\mathcal A}_{(G,N)}(\Gamma)}A^{p,q}(\Gamma \backslash G, E_{\alpha}).
\]

Define the left-$G$-invariant  Hermitian metric 
\[h_{G}=\alpha_{1}\cdot \bar\alpha_{1}+\dots +\alpha_{s}\cdot \bar\alpha_{s}+\beta_{1}\cdot \bar\beta_{1}+\dots +\beta_{t}\cdot \bar\beta_{t}.
\]
Define the Hermitian metric $h_{\alpha}$ on each $E_{\alpha}\in $ so that $h_{\alpha}(v_{\alpha},v_{\alpha})=1$.
We notice that for $\alpha, \alpha^{\prime}\in {\mathcal A}_{(G,N)}$, if $E_{\alpha}=E_{\alpha^{\prime}}$, then $h_{\alpha}=h_{\alpha^{\prime}}$ since $E_{\alpha}=E_{\alpha^{\prime}}$ if and only if  $\alpha_{\vert \Gamma}=\alpha^{\prime}_{\vert\Gamma}$ and hence $\alpha^{-1}\alpha^{\prime}$ is unitary.
We consider the  Hodge star operator  $\bar\ast:A^{p,q}(\Gamma \backslash G, E_{\alpha})\to  A^{s+t-p,s+t-q}(\Gamma \backslash G, E_{\alpha}^{\ast})$ associated with this metric.
Then we have
\[\bar\ast \left( \alpha_{I}\wedge \bar\alpha_{J}\wedge \beta_{K}\wedge \bar{\beta}_{L} \otimes v_{e^{\Psi_{IK\bar{L}}(x)}}\right)=\pm\alpha_{\check{I}}\wedge \bar\alpha_{\check{J}}\wedge \beta_{\check{K}}\wedge \bar{\beta}_{\check{L}}\otimes v_{e^{-\Psi_{IK\bar{L}}(x)}}
\]
where for some multi-indices $I,J\subset [s], K,L\subset [t]$ we write 
\[\Psi_{IK\bar{L}}(x)=\sum_{j\in I}x_{j}+\sum_{k\in K}\psi_{k}(x)+\sum_{l\in L}\bar{\psi}_{l}(x),
\]
$\check{I}=[s]-I$, $\check{J}=[s]-J$, $\check{K}=[t]-K$ and  $\check{L}=[t]-L$.
Since $G$ admits a lattice $\Gamma$, $G$ is unimodular (see \cite[Remark 1.9]{R}).
This implies 
\[\exp({\Psi}_{[s][t]\overline{[t]}}(x))=\exp\left(\sum_{i\in [s]}x_{i}+\sum_{k\in [t]}\psi_{k}(x)+\sum_{l\in [t]}\bar{\psi}_{l}(x)\right)=1.\]
Thus 
\[\exp(\Psi_{\check{I}\check{K}\overline{\check{L}}}(x))= \exp(-\Psi_{IK\bar{L}}(x)).
\]
By this, we can say that  the  Hodge star operator  $\bar\ast$ preserves the space
$\bigwedge W_{1}\otimes \bigwedge W_{2}$ (compare \cite[Lemma 2.3]{KRO}).

We can easily check that the Dolbeault operator on 
$W_{1} $ and $W_{2}$ is $0$.
Hence, $\bigwedge W_{1}\otimes \bigwedge W_{2}$ consists of harmonic forms associated with the Dolbeault operator.
This implies the following result.
\begin{prop}\label{inj}
We have an injection
\[ \bigwedge^{p} W_{1}\otimes \bigwedge^{q} W_{2} \hookrightarrow \bigoplus_{E_{\alpha}\in {\mathcal A}_{(G,N)}(\Gamma)}H^{p,q}(\Gamma \backslash G, E_{\alpha})
   \]
   hence we have 
   \[\dim \bigwedge ^{p}\C^{s+t}\otimes \bigwedge^{q}\C^{s+t} \le \sum_{E_{\alpha}\in {\mathcal A}_{(G,N)}(\Gamma)}\dim H^{p,q}(\Gamma \backslash G, E_{\alpha}). \]
\end{prop}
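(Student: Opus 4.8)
The plan is to exhibit every element of $\bigwedge^{p} W_{1}\otimes \bigwedge^{q} W_{2}$ as a $\bar\partial$-harmonic form and then invoke Dolbeault--Hodge theory on the compact complex manifold $(\Gamma\backslash G, J)$. First I would record that $\bar\partial$ annihilates each generator of $W_{1}$ and $W_{2}$: for a generator $\alpha_{i}\otimes v_{e^{x_{i}}}$ the two Leibniz terms, coming from $\bar\partial\alpha_{i}=-\frac12\bar\alpha_{i}\wedge\alpha_{i}$ and $\bar\partial v_{e^{x_{i}}}=\frac12\bar\alpha_{i}\otimes v_{e^{x_{i}}}$, cancel after reordering $\alpha_{i}\wedge\bar\alpha_{i}=-\bar\alpha_{i}\wedge\alpha_{i}$, and the same cancellation occurs for $\beta_{k}\otimes v_{e^{\psi_{k}(x)}}$ and for $\bar\beta_{l}\otimes v_{e^{\bar\psi_{l}(x)}}$, while $\bar\alpha_{i}$ is already closed. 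Since $\bar\partial$ is a derivation of the direct-sum differential graded algebra $\bigoplus_{E_{\alpha}}A^{\ast}(\Gamma\backslash G,E_{\alpha})$ and all generators are closed, it vanishes identically on $\bigwedge W_{1}\otimes\bigwedge W_{2}$; in particular every element is $\bar\partial$-closed and defines a Dolbeault class.

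Next I would upgrade closedness to harmonicity. Using the adjoint formula $\bar\partial^{\ast}=-\bar\ast\,\bar\partial\,\bar\ast$ for the metric $h_{G}$ together with the fibre metrics $h_{\alpha}$, it suffices to know that $\bar\ast$ carries $\bigwedge W_{1}\otimes\bigwedge W_{2}$ into itself inside the total direct sum: then for $\omega$ in this space $\bar\ast\omega$ again lies in it, so $\bar\partial(\bar\ast\omega)=0$ and hence $\bar\partial^{\ast}\omega=0$. This stability under $\bar\ast$ is exactly the computation displayed above, where $\bar\ast$ sends the monomial with character $e^{\Psi_{IK\bar L}(x)}$ to the complementary monomial with character $e^{-\Psi_{IK\bar L}(x)}=e^{\Psi_{\check I\check K\overline{\check L}}(x)}$, the last equality resting on the unimodularity of $G$. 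Combining $\bar\partial=0$ and $\bar\partial^{\ast}=0$ gives $\bar\partial\bar\partial^{\ast}+\bar\partial^{\ast}\bar\partial=0$ on $\bigwedge W_{1}\otimes\bigwedge W_{2}$, so every such form is harmonic.

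Finally I would deduce the injection. Writing an element of $\bigwedge^{p}W_{1}\otimes\bigwedge^{q}W_{2}$ as $\omega=\sum_{\alpha}\omega^{(\alpha)}$ according to its $E_{\alpha}$-components, each $\omega^{(\alpha)}\in A^{p,q}(\Gamma\backslash G,E_{\alpha})$ is harmonic, and by the Hodge isomorphism for the elliptic Laplacian on the compact manifold $\Gamma\backslash G$ a harmonic representative is cohomologically trivial only if it is zero. Hence $\omega$ maps to $0$ in $\bigoplus_{E_{\alpha}}H^{p,q}(\Gamma\backslash G,E_{\alpha})$ only if each $\omega^{(\alpha)}=0$, i.e.\ $\omega=0$, which is the asserted injection. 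Since $\dim W_{1}=\dim W_{2}=s+t$, the source has dimension $\dim\bigwedge^{p}\C^{s+t}\otimes\bigwedge^{q}\C^{s+t}$, and the inequality of dimensions follows at once. The genuine work is the generator computation and the $\bar\ast$-stability, both already in hand; the remaining obstacle is merely the bookkeeping that passes from harmonicity inside each line-bundle summand to injectivity into the direct sum, which is routine once one notes that the Laplacian preserves the $E_{\alpha}$-grading.
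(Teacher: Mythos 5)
Your proof is correct and follows essentially the same route as the paper: vanishing of $\bar\partial$ on the generators of $W_{1}$ and $W_{2}$, co-closedness via the $\bar\ast$-stability of $\bigwedge W_{1}\otimes\bigwedge W_{2}$ coming from unimodularity, hence harmonicity, and then injectivity into cohomology by Hodge theory on the compact manifold $\Gamma\backslash G$. The only difference is that you spell out the Leibniz cancellations and the adjoint-formula step that the paper leaves as ``we can easily check.''
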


\section{Oeljeklaus-Toma manifolds}
For positive integers $s,t$,  let $K$ be a finite extension field of $\Q$ of degree $s+2t$ 
 admitting  embeddings $\sigma_{1},\dots \sigma_{s},\sigma_{s+1},\dots, \sigma_{s+2t}$ into $\C$ such that $\sigma_{1},\dots ,\sigma_{s}$ are real embeddings and $\sigma_{s+1},\dots, \sigma_{s+2t}$ are complex ones satisfying $\sigma_{s+i}=\bar \sigma_{s+i+t}$ for $1\le i\le t$. 
Let ${\mathcal O}_{K}$ be the ring of algebraic integers of $K$, ${\mathcal O}_{K}^{\ast}$ the group of units in ${\mathcal O}_{K}$ and 
\[{\mathcal O}_{K}^{\ast\, +}=\{a\in {\mathcal O}_{K}^{\ast}: \sigma_{i}(a)>0 \,\, {\rm for \,\,  all}\,\, 1\le i\le s\}.
\]  
Define $\sigma :{\mathcal O}_{K}\to \R^{s}\times \C^{t}$ by
\[\sigma(a)=(\sigma_{1}(a),\dots ,\sigma_{s}(a),\sigma_{s+1}(a),\dots ,\sigma_{s+t}(a))
\]
for $a\in {\mathcal O}_{K}$.
Define $l:{\mathcal O}_{K}^{\ast\, +}\to \R^{s+t}$ by 
\[l(a)=(\log \sigma_{1}(a),\dots ,\log  \sigma_{s}(a), 2\log \vert \sigma_{s+1}(a)\vert,\dots ,2\log \vert \sigma_{s+t}(a)\vert)
\]
for $a\in {\mathcal O}_{K}^{\ast\, +}$.
Then by Dirichlet's units theorem, $l({\mathcal O}_{K}^{\ast\, +})$ is a lattice in the vector space $L=\{x\in \R^{s+t}\vert \sum_{i=1}^{s+t} x_{i}=0\}$.
Consider the projection $p:L\to \R^{s}$ given by the first $s$ coordinate functions.
Then we have a  subgroup $U$ with the rank $s$ of ${\mathcal O}_{K}^{\ast\, +}$ such that $p(l(U))$ is a lattice in $\R^{s}$.
Write $l(U)=\Z v_{1}\oplus\dots\oplus \Z v_{s}$ for generators $v_{1},\dots v_{s}$ of $l(U)$.
For the standard basis $e_{1},\dots ,e_{s+t}$ of $\R^{s+t}$, we have a regular real $s\times s$-matrix $(a_{ij})$ and $s\times t$ real constants  $b_{jk}$ such that
\[v_{i}=\sum_{j=1}^{s} a_{ij}(e_{j}+\sum_{k=1}^{t}b_{jk}e_{s+k})
\]
for any $1\le i\le s$.
Consider the complex upper half plane $H=\{z\in \C: {\rm Im} z>0\}=\R\times \R_{>0}$.
We have the left action of $U\ltimes{\mathcal O}_{K}$ on $H^{s}\times \C^{t}$
such that 
\begin{multline*}
(a,b)\cdot (x_{1}+\sqrt{-1}y_{1},\dots ,x_{s}+\sqrt{-1}y_{s}, z_{1},\dots ,z_{t})\\
=(\sigma_{1}(a)x_{1}+\sigma_{1}(b)+\sqrt{-1} \sigma_{1}(a)y_{1}, \dots ,\sigma_{s}(a)x_{s}+\sigma_{s}(b)+\sqrt{-1} \sigma_{s}(a)y_{s},\\
 \sigma_{s+1}(a)z_{1}+\sigma_{s+1}(b),\dots ,\sigma_{s+t}(a)z_{t}+\sigma_{s+t}(b)).
\end{multline*}
In \cite{OT} it is proved that the quotient $X(U, K)=U\ltimes{\mathcal O}_{K} \backslash H^{s}\times \C^{t}$ is compact.
Actually we have the real fiber bundle $X(U, K)\to U\backslash (\R_{>0})^{s}$  with the fiber $\sigma({\mathcal O}_{K}) \backslash  (\R^{s}\times \C^{t})$ and both the base $U\backslash (\R_{>0})^{s}$ and the fiber $\sigma({\mathcal O}_{K}) \backslash  (\R^{s}\times \C^{t})$  are real tori.
We call this complex manifold an  Oeljeklaus-Toma (OT) manifold.

As in \cite{KV}, we present OT-manifolds as solvmanifolds considered  in the last section.
For $a \in U$ and $(x_{1},\dots ,x_{s})=p(l(a))\in p(l(U))$, since $l(U)$ is generated by the basis $v_{1},\dots,v_{s}$ as above, $l(a)$ is a linear combination of $e_{1}+\sum_{k=1}^{t}b_{1k}e_{s+k},\dots ,e_{s}+\sum_{k=1}^{t}b_{sk}e_{s+k}$ and hence we have 
\[l(a)=\sum_{i=1}^{s}x_{i}(e_{i}+\sum_{k=1}^{t}b_{ik}e_{s+k})=(x_{1},\dots ,x_{s}, \sum_{i=1}^{s}b_{i1}x_{i},\dots,\sum_{i=1}^{s}b_{it}x_{i}).
\]
By $2\log \vert \sigma_{s+k}(a)\vert=\sum_{i=1}^{s}b_{ik}x_{i}$, we can write 
\[\sigma_{s+k}(a)=e^{\frac{1}{2}\sum_{i=1}^{s}b_{ik}x_{i}+\sqrt{-1}\sum_{i=1}^{s}c_{ik}x_{i}}\]
for some $c_{ik}\in \R$.
We consider the Lie group $G=\R^{s}\ltimes_{\phi} (\R^{s}\times \C^{t})$ with
\[
\phi(x_{1},\dots ,x_{s})\\
={\rm diag}(e^{x_{1}},\dots ,e^{x_{s}},e^{\psi_{1}(x)},\dots ,e^{\psi_{t}(x)})\]
 where $\psi_{k}=\frac{1}{2}\sum_{i=1}^{s}b_{ik}x_{i}+\sqrt{-1}\sum_{i=1}^{s}c_{ik}x_{i}$.
Then for $(x_{1},\dots ,x_{s})\in p(l(U))$, we have 
\[\phi(x_{1},\dots ,x_{s})(\sigma ({\mathcal O}_{K}))\subset \sigma({\mathcal O}_{K}).\]
Write $ p(l(U))=\Lambda$ and $ \sigma({\mathcal O}_{K})=\Delta$.
Then, via the diffeomorphism
\begin{multline*}H^{s}\times \C^{t}\ni (y_{1}+\sqrt{-1}w_{1},\dots, y_{s}+\sqrt{-1}w_{s}, z_{1},\dots, z_{t})\\
\mapsto (\log(w_{1}),\dots,\log(w_{s}),-y_{1},\dots, -y_{s},z_{1},\dots, z_{t})\in \R^{s}\ltimes_{\phi} (\R^{s}\times \C^{t}),
\end{multline*}
 the OT-manifold $X(U, K)=U\ltimes{\mathcal O}_{K} \backslash H^{s}\times \C^{t}$ is identified with a complex  solvmanifold  $(\Gamma\backslash G, J)$ of the form  as in the last section.

\begin{thm}\label{Dol}
Define $W_{1}$ and $W_{2}$ as (\ref{WW}).
An isomorphism
\[\bigwedge^{p} W_{1}\otimes \bigwedge^{q} W_{2}\cong \bigoplus_{E_{\alpha}\in{\mathcal A}_{(G,N)}(\Gamma)}H^{p,q}(\Gamma \backslash G, E_{\alpha})
   \]
holds.
Hence we have 
   \[\dim \bigwedge ^{p}\C^{s+t}\otimes \bigwedge^{q}\C^{s+t} = \sum_{E_{\alpha}\in{\mathcal A}_{(G,N)}(\Gamma)}\dim H^{p,q}(\Gamma \backslash G, E_{\alpha}).\] 
\end{thm}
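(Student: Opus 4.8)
The plan is to prove the inequality opposite to the one already recorded in Proposition \ref{inj}. Once the dimensions on the two sides agree, the injection furnished by Proposition \ref{inj} is automatically surjective, and the asserted isomorphism
\[\textstyle\bigwedge^{p}W_{1}\otimes\bigwedge^{q}W_{2}\cong \bigoplus_{E_{\alpha}\in{\mathcal A}_{(G,N)}(\Gamma)}H^{p,q}(\Gamma\backslash G,E_{\alpha})\]
follows for free. So it suffices to establish the upper bound $\sum_{E_{\alpha}}\dim H^{p,q}(\Gamma\backslash G,E_{\alpha})\le \binom{s+t}{p}\binom{s+t}{q}$ for all $p,q$.

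First I would collect the two numerical inputs that are already in hand. On the de Rham side, Proposition \ref{III} gives $\bigoplus_{E_{\alpha}}H^{\ast}(\Gamma\backslash G,E_{\alpha})\cong\bigwedge V$; since the differential on $V$ vanishes and $\dim V=2s+2t$, this yields $\sum_{E_{\alpha}}\dim H^{r}(\Gamma\backslash G,E_{\alpha})=\binom{2s+2t}{r}$ in each total degree $r$. On the combinatorial side, Vandermonde's identity gives $\sum_{p+q=r}\binom{s+t}{p}\binom{s+t}{q}=\binom{2s+2t}{r}$, so the per-bidegree lower bounds of Proposition \ref{inj}, summed over $p+q=r$, already total exactly $\binom{2s+2t}{r}$.

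The decisive step is to show that in each total degree the twisted Dolbeault numbers sum to the same value, namely
\[\sum_{p+q=r}\ \sum_{E_{\alpha}}\dim H^{p,q}(\Gamma\backslash G,E_{\alpha})=\binom{2s+2t}{r}.\]
Equivalently, the Hodge--to--de Rham (Frölicher) spectral sequence with coefficients in each flat line bundle $E_{\alpha}$ must degenerate at $E_{1}$. Note that the general Frölicher inequality only gives $\sum_{p+q=r}\dim H^{p,q}(\Gamma\backslash G,E_{\alpha})\ge\dim H^{r}(\Gamma\backslash G,E_{\alpha})$, which is the wrong direction for our upper bound; what is genuinely needed is the reverse, i.e. degeneration. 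This is exactly where I would invoke the analysis of \cite{OTH}: the Hodge decomposition proved there for OT-manifolds via their description over Cousin (toroidal) groups must be transported to the twisted coefficients $E_{\alpha}$, by verifying that each flat line bundle is compatible with the Cousin-group fibration used in \cite{OTH} and that the associated Borel-type spectral sequence still degenerates. I expect this passage to twisted coefficients to be the main obstacle, since the restricted characters $\alpha|_{\Lambda}$ are in general not of finite order, so one cannot simply reduce to the untwisted statement by pulling back along a finite cover.

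Granting the degeneration, the proof closes by a squeezing argument. Summing the inequalities of Proposition \ref{inj} over $p+q=r$ gives
\[\binom{2s+2t}{r}=\sum_{p+q=r}\binom{s+t}{p}\binom{s+t}{q}\le\sum_{p+q=r}\sum_{E_{\alpha}}\dim H^{p,q}(\Gamma\backslash G,E_{\alpha})=\binom{2s+2t}{r},\]
so every intermediate inequality is forced to be an equality; in particular $\sum_{E_{\alpha}}\dim H^{p,q}(\Gamma\backslash G,E_{\alpha})=\binom{s+t}{p}\binom{s+t}{q}$ for each $(p,q)$. Since $\bigwedge^{p}W_{1}\otimes\bigwedge^{q}W_{2}$ already injects into $\bigoplus_{E_{\alpha}}H^{p,q}(\Gamma\backslash G,E_{\alpha})$ and has precisely this finite dimension, the injection is an isomorphism, which is the assertion of the theorem.
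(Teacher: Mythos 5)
Your overall squeeze strategy---prove the upper bound $\sum_{E_{\alpha}}\dim H^{p,q}(\Gamma\backslash G,E_{\alpha})\le\binom{s+t}{p}\binom{s+t}{q}$ and conclude that the injection of Proposition \ref{inj} is onto---is exactly the shape of the paper's argument, and your bookkeeping (Proposition \ref{III} for the de Rham totals, Vandermonde, the forced equalities) is correct. But there is a genuine gap at precisely the point you flag yourself: the ``decisive step,'' degeneration at $E_{1}$ of the Fr\"olicher spectral sequence with coefficients in \emph{every} flat line bundle $E_{\alpha}$, is never proved, only deferred to ``transporting the analysis of \cite{OTH} to twisted coefficients.'' What \cite{OTH} actually provides is the Hodge decomposition for the \emph{untwisted} cohomology of OT-manifolds; since the characters $\alpha|_{\Lambda}$ are in general of infinite order there is, as you yourself note, no finite-cover reduction, and the twisted degeneration you need is essentially as strong as Theorem \ref{Dol} itself (indeed it is a \emph{consequence} of Theorem \ref{Dol} together with Proposition \ref{III}, not an available input). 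As written, the heart of the proof is missing.

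The paper obtains the upper bound by a different and cheaper mechanism that avoids twisted Fr\"olicher degeneration entirely. It views $H^{p,q}(\Gamma\backslash G,E_{\alpha})$ as $H^{q}(\Gamma\backslash G,\Omega^{p}(E_{\alpha}))$ and runs the Leray spectral sequence of the real torus-bundle map $\pi\colon\Gamma\backslash G\to\Lambda\backslash\R^{s}$. Two points make this work. First, the inequality $\dim H^{p,q}(\Gamma\backslash G,E_{\alpha})\le\sum_{a+b=q}\dim E_{2}^{a,b}(\Omega^{p}(E_{\alpha}))$ is automatic from convergence (the $E_{\infty}$-page is a subquotient of the $E_{2}$-page); no degeneration is assumed in advance---degeneration at $E_{2}$ falls out at the end of the squeeze. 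Second, the $E_{2}$-page, \emph{summed over all} $E_{\alpha}\in{\mathcal A}_{(G,N)}(\Gamma)$, is exactly computable: since $E_{\alpha}$ is constant along fibers, $R^{b}\pi_{\ast}\Omega^{p}(E_{\alpha})\cong(R^{b}\pi_{\ast}\Omega^{p})\otimes\tilde{E}_{\alpha|_{\Lambda}}$; by \cite[Lemma 4.3]{OTH} (which is the only input the paper takes from that reference, not its Hodge decomposition theorem) $R^{b}\pi_{\ast}\Omega^{p}$ is a local system given by a \emph{diagonal} representation of $\Lambda$, locally $\bigwedge^{p}\C^{s+t}\otimes\bigwedge^{b}\C^{t}$; and since a rank-one local system $E$ on the torus $\Lambda\backslash\R^{s}$ has $H^{\ast}(\Lambda\backslash\R^{s},E)\cong\bigwedge\C^{s}$ when $E$ is trivial and $0$ otherwise, each rank-one summand of $R^{b}\pi_{\ast}\Omega^{p}$ is killed by every character $\beta\in{\rm Hom}(\Lambda,\C^{\ast})$ except the single one inverting it, so the sum over all $\beta$ contributes exactly $\bigwedge^{a}\C^{s}\otimes\bigwedge^{p}\C^{s+t}\otimes\bigwedge^{b}\C^{t}$. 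This is the idea your outline lacks: summing over the whole family of line bundles turns an intractable per-bundle statement into exact, elementary bookkeeping on the base torus. If you replace your Fr\"olicher step by this Leray computation, the rest of your argument can stand as written.
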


\begin{proof}
We regard $H^{p,q}(\Gamma \backslash G, E_{\alpha})$ as the sheaf cohomology $H^{q}(\Gamma \backslash G, \Omega^{p}(E_{\alpha}))$.
We consider the real fiber bundle $ \pi:\Gamma \backslash G\to \Lambda\backslash\R^{s}$ over the real torus $ \Lambda\backslash\R^{s}$  with the real torus fiber $\Delta\backslash  (\R^{s}\times \C^{t})$.
This fiber bundle is identified with the fiber bundle $X(U, K)\to U\backslash (\R_{>0})^{s}$  with the fiber $\sigma({\mathcal O}_{K}) \backslash  (\R^{s}\times \C^{t})$.
Consider the Leray spectral sequence $E^{\ast,\ast}_{\ast}(\Omega^{p}(E_{\alpha}))$ associated with the map $ \pi:\Gamma \backslash G\to \Lambda\backslash\R^{s}$ and the sheaf $\Omega^{p}(E_{\alpha})$.
Then $E^{a,b}_{2}(\Omega^{p}(E_{\alpha}))=H^{a}(\Lambda\backslash\R^{s}, R^{b}\pi_{\ast} \Omega^{p}(E_{\alpha}))$ and $E^{a,b}_{r}(\Omega^{p}(E_{\alpha}))$ converges to $H^{a+b}(\Gamma \backslash G, \Omega^{p}(E_{\alpha}))$.
The sheaf  $R^{b}\pi_{\ast} \Omega^{p}(E_{\alpha})$ over $\Lambda\backslash\R^{s}$ is the sheafification  of the pre-sheaf such that each open set 
$O\subset \Lambda\backslash\R^{s}$  corresponds  to  the vector space $H^{b}(\pi^{-1}(O), \Omega^{p}(E_{\alpha}))$.
Since the flat bundle $E_{\alpha}$ corresponds to a homomorphism $\alpha_{\vert\Lambda}\in Hom(\Lambda,\C^{\ast})$,
as a sheaf on $\Gamma \backslash G$,  $E_{\alpha}$ is constant on $\pi^{-1}(O)$ for sufficiently small open set $O\subset  \Lambda\backslash\R^{s}$.
Thus we have $ R^{b}\pi_{\ast} \Omega^{p}(E_{\alpha})\cong (R^{b}\pi_{\ast} \Omega^{p})\otimes \tilde {E}_{\alpha_{\vert\Lambda}}$ where $\tilde {E}_{\alpha_{\vert\Lambda}}$ is the local system on $\Lambda\backslash\R^{s}$ corresponding to $\alpha_{\vert\Lambda}\in Hom(\Lambda,\C^{\ast})$.
\cite[Lemma 4.3]{OTH} says that $R^{b}\pi_{\ast} \Omega^{p} $ is a local system on the torus $ \Lambda\backslash\R^{s}$ so that locally 
$R^{b}\pi_{\ast} \Omega^{p}$ is isomorphic to $\bigwedge^{p}\C^{s+t}\otimes \bigwedge^{b} \C^{t}$ and $R^{b}\pi_{\ast} \Omega^{p}$ corresponds to a diagonal representation of $\Lambda=p(l(U))\cong U$.
Let $E$ be a local system on $\Lambda\backslash\R^{s}$ corresponding to a $1$-dimensional complex representation of $\Lambda$.
It is well known that $H^{\ast}(\Lambda\backslash\R^{s},E)\cong \bigwedge \C^{s}$ for trivial  $E$ otherwise $H^{\ast}(\Lambda\backslash\R^{s},E)=0$.
By this for any $E$, we have
\[\bigoplus_{\beta\in Hom(\Lambda,\C^{\ast})} H^{\ast}(\Lambda\backslash\R^{s},E\otimes \tilde {E}_\beta)= H^{\ast}(\Lambda\backslash\R^{s},E\otimes E^{-1})\oplus \bigoplus_{\tilde{E}_{\beta}\not=E^{-1}} H^{\ast}(\Lambda\backslash\R^{s},E\otimes \tilde {E}_\beta)\cong \bigwedge \C^{s}.
\]
Thus, identifying ${\mathcal A}_{(G,N)}(\Gamma)$  with $Hom(\Lambda,\C^{\ast})$, we have 
\[\bigoplus_{E_{\alpha}\in{\mathcal A}_{(G,N)}(\Gamma)} E^{a,b}_{2}(\Omega^{p}(E_{\alpha}))\cong \bigoplus_{\beta\in Hom(\Lambda,\C^{\ast})} H^{a}(\Lambda\backslash\R^{s},  (R^{b}\pi_{\ast} \Omega^{p})\otimes \tilde {E}_\beta)\cong \bigwedge^{a} \C^{s} \otimes \bigwedge^{p}\C^{s+t}\otimes \bigwedge^{b} \C^{t}.\]
We have
\[ \bigoplus_{a+b=q}\bigoplus_{E_{\alpha}\in{\mathcal A}_{(G,N)}(\Gamma)} E^{a,b}_{2}(\Omega^{p}(E_{\alpha})) \cong  \bigwedge^{p}\C^{s+t}\otimes \bigwedge^{q} \C^{s+t}.\]
By Proposition \ref{inj}, we have 
\[\sum_{a+b=q}\sum_{E_{\alpha}\in{\mathcal A}_{(G,N)}(\Gamma)} \dim E^{a,b}_{2}(\Omega^{p}(E_{\alpha}))= \dim \bigwedge^{p}\C^{s+t}\otimes \bigwedge^{q} \C^{s+t}\le \sum_{E_{\alpha}\in{\mathcal A}_{(G,N)}(\Gamma)}\dim H^{p,q}(\Gamma \backslash G, E_{\alpha}).\]
Since $E^{a,b}_{r}(\Omega^{p}(E_{\alpha}))$ converges to $H^{a+b}(\Gamma \backslash G, \Omega^{p}(E_{\alpha}))\cong H^{p,a+b}(\Gamma \backslash G, E_{\alpha})$,
 the converse inequality holds by the standard argument on the spectral sequence.
Thus, the Leray spectral sequence $E^{\ast,\ast}_{\ast}(\Omega^{p}(E_{\alpha}))$ degenerates at $E_{2}$-term and so
we have
\[ \dim \bigwedge^{p}\C^{s+t}\otimes \bigwedge^{q} \C^{s+t}= \sum_{E_{\alpha}\in{\mathcal A}_{(G,N)}(\Gamma)}\dim H^{p,q}(\Gamma \backslash G, E_{\alpha}).\]
Hence the injection in  Proposition \ref{inj} is an isomorphism.
\end{proof}

Since $U\cong p(l(U))=\Lambda$ is embedded in $\R^{s}$ as a lattice,
the set ${\mathcal A}_{(G,N)}(\Gamma)$ is equal to  the set ${\mathcal A}(U)$ of isomorphism classes of flat complex line bundles over  $X(K,U)=\Gamma \backslash G$ given by homomorphisms in ${\rm Hom}(U,\C^{\ast})$.
We have the following consequence of Proposition \ref{inj}.
 \begin{cor}\label{Pcor}
 For any integer $r$, we have
\[\dim \bigwedge^{r}\C^{2s+2t} = \sum_{E\in {\mathcal A}(U)}\dim H^{r}(X(K,U),E).
\] 
\end{cor}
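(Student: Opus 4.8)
The plan is to read off Corollary \ref{Pcor} as the degree-$r$ component of the graded isomorphism already established in Proposition \ref{III}, so that the essential content lives entirely there and only a dimension count together with a translation of notation remains to be supplied.

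First I would record the dimension of $V$. The displayed generating set of $V$ preceding Proposition \ref{III} consists of the $s$ forms $dx_{i}$, the $s$ forms $e^{-x_{i}}dy_{i}\otimes v_{e^{x_{i}}}$, the $t$ forms $e^{-\psi_{k}(x)}dz_{k}\otimes v_{e^{\psi_{k}(x)}}$, and the $t$ forms $e^{-\bar\psi_{k}(x)}d\bar{z}_{k}\otimes v_{e^{\bar\psi_{k}(x)}}$, so $\dim V=2s+2t$. Since the differential on $V$ vanishes, $\bigwedge V$ coincides with its own cohomology, and the isomorphism of Proposition \ref{III} respects the grading; restricting it to degree $r$ yields
\[\bigwedge^{r} V\cong\bigoplus_{E_{\alpha}\in{\mathcal A}_{(G,N)}(\Gamma)}H^{r}(\Gamma\backslash G,E_{\alpha}).\]
Taking dimensions on the left gives $\binom{2s+2t}{r}=\dim\bigwedge^{r}\C^{2s+2t}$.

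Finally I would pass to the Oeljeklaus--Toma description. Using $X(K,U)=\Gamma\backslash G$ and the identification ${\mathcal A}_{(G,N)}(\Gamma)={\mathcal A}(U)$ recorded just above the statement, which rests on $U\cong p(l(U))=\Lambda$ being a lattice in $\R^{s}$, the right-hand side becomes $\sum_{E\in{\mathcal A}(U)}\dim H^{r}(X(K,U),E)$, which is exactly the claim. There is no real obstacle beyond this bookkeeping, since all of the analytic work has already been carried out in Proposition \ref{III}; the only point requiring care is the compatibility of the grading and the matching of the two index sets.

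If one instead wished to deduce the de Rham equality from the Dolbeault Theorem \ref{Dol}, one would sum the equality there over $p+q=r$ and invoke the Vandermonde identity $\sum_{p+q=r}\binom{s+t}{p}\binom{s+t}{q}=\binom{2s+2t}{r}$ on the model side. The extra ingredient then needed on the cohomology side is a Hodge-type identity $\sum_{E}\dim H^{r}(X(K,U),E)=\sum_{p+q=r}\sum_{E}\dim H^{p,q}(X(K,U),E)$ for the full direct sum over flat line bundles, of the degeneration type established in \cite{OTH}. I would nevertheless prefer the direct route through Proposition \ref{III}, as it delivers the de Rham statement immediately and avoids this additional input.
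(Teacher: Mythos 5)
Your proof is correct and takes essentially the same route as the paper: the corollary is read off degreewise from the isomorphism of Proposition \ref{III}, using $\dim V=2s+2t$, the vanishing of the differential on $V$, and the identification ${\mathcal A}_{(G,N)}(\Gamma)={\mathcal A}(U)$ recorded just before the statement (the paper's phrase ``consequence of Proposition \ref{inj}'' is evidently a slip for Proposition \ref{III}, as the Remark in the introduction confirms). Your alternative route via Theorem \ref{Dol} is unnecessary, as you yourself note, so nothing further is required.
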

\begin{rem}
See \cite{IO} for the de Rham cohomology of OT-manifolds with values in trivial and some specific flat line bundles.
\end{rem}

Theorem \ref{Dol} gives the following statement.
\begin{cor}
For any integers $p,q$,  we  have 
   \[\dim \bigwedge ^{p}\C^{s+t}\otimes \bigwedge^{q}\C^{s+t} = \sum_{E\in {\mathcal A}(U)}\dim H^{p,q}(X(K,U),E).\] 
    \end{cor}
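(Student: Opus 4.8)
The plan is to obtain this Corollary as a direct consequence of Theorem \ref{Dol}, the only work being to translate the two sides of the equality from the solvmanifold language into the intrinsic language of the OT-manifold. First I would recall the explicit diffeomorphism constructed earlier in this section, which identifies the OT-manifold $X(K,U)=U\ltimes {\mathcal O}_{K}\backslash H^{s}\times \C^{t}$ with the complex solvmanifold $(\Gamma\backslash G, J)$, where $G=\R^{s}\ltimes_{\phi}(\R^{s}\times \C^{t})$, $\Lambda=p(l(U))$ and $\Delta=\sigma({\mathcal O}_{K})$. Since this diffeomorphism intertwines the standard complex structure on $H^{s}\times \C^{t}$ with $J$, for any holomorphic line bundle $E$ the Dolbeault cohomology $H^{p,q}(X(K,U),E)$ is identified with $H^{p,q}(\Gamma\backslash G, E_{\alpha})$ once $E$ and $E_{\alpha}$ are matched.

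Second I would match the indexing sets of the two direct sums. By definition ${\mathcal A}_{(G,N)}(\Gamma)$ is identified with ${\rm Hom}(\Lambda,\C^{\ast})$. Since the map $U\cong p(l(U))=\Lambda$ realizes $U$ as a lattice in $C=\R^{s}$, every character of $U$ extends uniquely to an element of ${\rm Hom}(\R^{s},\C^{\ast})={\mathcal A}_{(G,N)}$ and restricts back to the same character on the lattice; hence ${\rm Hom}(\Lambda,\C^{\ast})\cong {\rm Hom}(U,\C^{\ast})$ and correspondingly ${\mathcal A}_{(G,N)}(\Gamma)={\mathcal A}(U)$. This is exactly the identification recorded in the text preceding the statement, and it puts the two direct sums $\sum_{E_{\alpha}\in {\mathcal A}_{(G,N)}(\Gamma)}$ and $\sum_{E\in {\mathcal A}(U)}$ in bijective correspondence term by term.

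Finally, substituting these identifications into the equality of Theorem \ref{Dol} gives the claim. The one point I would check with care --- though it is essentially bookkeeping --- is that the two identifications are holomorphically compatible, i.e.\ that the flat holomorphic line bundle $E_{\alpha}$ over $(\Gamma\backslash G, J)$ corresponds under the diffeomorphism to the flat holomorphic line bundle $E\in {\mathcal A}(U)$ attached to $\rho=\alpha_{\vert\Lambda}$ transported through $U\cong \Lambda$. Since the flat structure of $E_{\alpha}$ depends only on $\alpha_{\vert\Gamma}$ and its holomorphic structure is the one induced from $J$, this compatibility is immediate, and no genuine analytic obstacle arises: all the real content --- the harmonic representatives of Proposition \ref{inj} and the degeneration of the Leray spectral sequence --- has already been absorbed into Theorem \ref{Dol}.
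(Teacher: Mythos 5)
Your proposal is correct and takes essentially the same route as the paper, which derives this corollary in one step from Theorem~\ref{Dol} together with the identification of $X(K,U)$ with the complex solvmanifold $(\Gamma\backslash G,J)$ and of ${\mathcal A}_{(G,N)}(\Gamma)$ with ${\mathcal A}(U)$ via $U\cong p(l(U))=\Lambda$. One cosmetic correction: a character of the lattice $\Lambda$ extends to an element of ${\rm Hom}(\R^{s},\C^{\ast})$ but \emph{not} uniquely (choices of logarithm on generators differ by elements of $2\pi\sqrt{-1}\,\Z$); this is harmless for your argument, since the flat bundle $E_{\alpha}$ depends only on $\alpha_{\vert\Gamma}$, and surjectivity of the restriction map is all the term-by-term matching of the two sums requires.
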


\section{Cohomology of holomorphic  line bundles over OT-manifolds:vanishing and non-vanishing}

For each $E_{\alpha}\in {\mathcal A}_{(G,N)}(\Gamma)$, by Theorem \ref{Dol},
we have 
\[H^{p,q}(\Gamma \backslash G, E_{\alpha})\cong \left\langle \alpha_{I}\wedge \bar\alpha_{J}\wedge \beta_{K}\wedge \bar{\beta}_{L} \otimes v_{e^{\Psi_{IK\bar{L}}(x)}}
\left\vert \begin{array}{cc}\vert I\vert+ \vert K\vert =p \text{ and } \vert J \vert + \vert L\vert = q  \\
E_{\alpha}=E_{e^{\Psi_{IK\bar{L}}(x)}}
 \end{array}\right. 
  \right\rangle.
\]

\begin{cor}\label{Van}
$H^{p,q}(\Gamma \backslash G, E_{\alpha})\not=0$ if and only if  for some multi-indices  $I\subset [s], K,L\subset [t]$ with $\vert I\vert+\vert K\vert=p$ and $\vert L \vert\le q$, $E_{\alpha}=E_{e^{\Psi_{IK\bar{L}}(x)}}$.
If $E_{\alpha}=E_{e^{\Psi_{IK\bar{L}}(x)}}$, then we have
\[\dim H^{p,q}(\Gamma \backslash G, E_{\alpha})\ge \left( \begin{array}{cc}
s  \\
q-\vert L \vert
\end{array}\right)
\]
where $ \left( \begin{array}{cc}
n  \\
k
\end{array}\right)$ means the number of $k$-combinations.

\end{cor}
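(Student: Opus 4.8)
The plan is to read the corollary directly off the explicit basis for $H^{p,q}(\Gamma\backslash G, E_\alpha)$ recorded immediately before the statement, which identifies this cohomology with the span of the harmonic forms $\alpha_I \wedge \bar\alpha_J \wedge \beta_K \wedge \bar\beta_L \otimes v_{e^{\Psi_{IK\bar L}(x)}}$, where $I,J\subset[s]$ and $K,L\subset[t]$ range over all multi-indices satisfying the bidegree conditions $|I|+|K|=p$ and $|J|+|L|=q$ together with the matching condition $E_\alpha = E_{e^{\Psi_{IK\bar L}(x)}}$. These forms are linearly independent by construction, so everything reduces to counting the admissible quadruples $(I,J,K,L)$.

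The one structural feature I would stress is the asymmetry in the generators $W_1$ and $W_2$ of (\ref{WW}): every generator of $W_1$ carries a nontrivial line-bundle factor, but in $W_2$ only the $\bar\beta_l$ carry such a factor, while the $\bar\alpha_j$ take values in the trivial bundle. Hence in $\Psi_{IK\bar L}(x)=\sum_{i\in I}x_i+\sum_{k\in K}\psi_k(x)+\sum_{l\in L}\bar\psi_l(x)$ the index $J$ does not appear at all, so the line bundle attached to a basis form is independent of $J$. This is exactly what turns $J$ into a free parameter once $I,K,L$ are fixed.

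For the non-vanishing dichotomy I would then argue as follows. If $H^{p,q}(\Gamma\backslash G, E_\alpha)\neq 0$ there is an admissible quadruple, whence $|I|+|K|=p$, $E_\alpha=E_{e^{\Psi_{IK\bar L}(x)}}$, and $|L|=q-|J|\le q$, giving the stated necessary condition on $I,K,L$. Conversely, given $I\subset[s]$, $K,L\subset[t]$ with $|I|+|K|=p$, $|L|\le q$ and $E_\alpha=E_{e^{\Psi_{IK\bar L}(x)}}$, I complete the quadruple by choosing any $J\subset[s]$ with $|J|=q-|L|$; such a $J$ exists precisely when $0\le q-|L|\le s$. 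The inequality $q-|L|\ge0$ is the stated condition $|L|\le q$, and the upper constraint $q-|L|\le s$ is what is encoded by the convention $\binom{s}{k}=0$ for $k>s$ in the dimension bound, so the two halves of the statement are consistent. For the dimension estimate I fix one such $(I,K,L)$ and let $J$ range over the $\binom{s}{q-|L|}$ subsets of $[s]$ of size $q-|L|$; these produce $\binom{s}{q-|L|}$ distinct basis vectors all lying in $H^{p,q}(\Gamma\backslash G, E_\alpha)$, which yields the asserted lower bound.

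Since the argument is pure bookkeeping on the explicit harmonic basis furnished by Theorem \ref{Dol}, I do not expect a substantive obstacle. The only point demanding care is the observation that the bundle label $\Psi_{IK\bar L}$ is genuinely independent of $J$, which rests on the $\bar\alpha_j$ being untwisted in $W_2$; with that in hand, both the vanishing criterion and the combinatorial lower bound follow at once from counting the free index $J$.
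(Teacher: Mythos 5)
Your proposal is correct and is essentially the paper's own argument: the corollary is read off from the isomorphism $H^{p,q}(\Gamma \backslash G, E_{\alpha})\cong \langle \alpha_{I}\wedge \bar\alpha_{J}\wedge \beta_{K}\wedge \bar{\beta}_{L} \otimes v_{e^{\Psi_{IK\bar{L}}(x)}}\rangle$ furnished by Theorem \ref{Dol}, using exactly your key observation that $\Psi_{IK\bar{L}}$ does not involve $J$, so that $J$ is a free index counted by $\binom{s}{q-\vert L\vert}$. Your explicit handling of the edge case $q-\vert L\vert>s$ (where no admissible $J$ exists and the binomial coefficient vanishes) is in fact slightly more careful than the paper's statement.
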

We notice that  $E_{\alpha}=E_{e^{\Psi_{IK\bar{L}}(x)}}$ if and only if $\alpha(x)=e^{\Psi_{IK\bar{L}(x)}}$ for any $x\in \Lambda$.
For the trivial $E_{\alpha}$, this Corollary gives \cite[Corollary 3.5]{KRO}.

 For $u\in U$ with $x=p(l(u))\in p(l(U))$ we have $\sigma_{i}(u)=e^{x_{i}}$ for $1\le i\le s$,  $\sigma_{s+k}(u)=e^{\psi_{k}(x)}$ and $\sigma_{s+t+k}(u)=e^{\bar\psi_{k}(x)}$ for  $1\le k\le s$.
Hence we have  
\[e^{\Psi_{IK\bar{L}}(x)}=\prod_{i\in I}\sigma_{i}(u)\prod_{j\in K}\sigma_{s+k}(u)\prod_{l\in L}\sigma_{s+t+l}(u).\]

\begin{cor}
Let $E$ be a flat complex line bundle over an OT-manifold $X(K,U)$ corresponding to $\rho\in {\rm Hom}(U,\C^{\ast})$.
Then  \[
H^{p,q}(X(K,U),E)\not=0\]
 if and only if  for some multi-indices  $I\subset [s], K,L\subset [t]$   with $\vert I\vert+\vert K\vert=p$ and $\vert L \vert\le q$,  we have
 \[\rho(u)=\prod_{i\in I}\sigma_{i}(u)\prod_{k\in K}\sigma_{s+k}(u)\prod_{l\in L}\sigma_{s+t+l}(u)\]
  for any $u\in U$.
  If 
  \[\rho(u)=\prod_{i\in I}\sigma_{i}(u)\prod_{k\in K}\sigma_{s+k}(u)\prod_{l\in L}\sigma_{s+t+l}(u)\]
   for any $u\in U$,
 then  we have 
   \[\dim H^{p,q}(X(K,U),E)\ge \left( \begin{array}{cc}
s  \\
q-\vert L \vert
\end{array}\right).
\]

\end{cor}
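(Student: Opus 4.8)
The plan is to obtain this statement as a direct translation of the solvmanifold result Corollary~\ref{Van} into the arithmetic language of the field $K$ and its embeddings, using the presentation of $X(K,U)$ as $\Gamma\backslash G$ established in this section. First I would fix the identification recorded above, under which $X(K,U)=\Gamma\backslash G$ with $\Lambda=p(l(U))$ and $\Delta=\sigma(\mathcal{O}_{K})$. Because $U\cong p(l(U))=\Lambda$ is embedded as a lattice in $\R^{s}$, the set ${\mathcal A}_{(G,N)}(\Gamma)$ coincides with ${\mathcal A}(U)={\rm Hom}(U,\C^{\ast})$; thus the flat line bundle $E$ attached to $\rho$ is precisely $E_{\alpha}$ for the $\alpha\in{\mathcal A}_{(G,N)}$ whose restriction to $\Lambda$ corresponds to $\rho$ under $U\cong\Lambda$. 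Under this dictionary $H^{p,q}(X(K,U),E)=H^{p,q}(\Gamma\backslash G,E_{\alpha})$, so every assertion about $E$ becomes an assertion about $E_{\alpha}$.

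Next I would apply Corollary~\ref{Van} verbatim: $H^{p,q}(\Gamma\backslash G,E_{\alpha})\neq 0$ exactly when $E_{\alpha}=E_{e^{\Psi_{IK\bar{L}}(x)}}$ for some multi-indices $I\subset[s]$, $K,L\subset[t]$ with $|I|+|K|=p$ and $|L|\le q$, and in that case $\dim H^{p,q}(\Gamma\backslash G,E_{\alpha})\ge\binom{s}{q-|L|}$. The only remaining task is to rewrite the bundle-equality $E_{\alpha}=E_{e^{\Psi_{IK\bar{L}}(x)}}$ as a condition on $\rho$. For this I would use the equivalence noted just before the statement, namely that $E_{\alpha}=E_{e^{\Psi_{IK\bar{L}}(x)}}$ holds if and only if $\alpha(x)=e^{\Psi_{IK\bar{L}}(x)}$ for all $x\in\Lambda$, together with the character dictionary $\sigma_{i}(u)=e^{x_{i}}$, $\sigma_{s+k}(u)=e^{\psi_{k}(x)}$, $\sigma_{s+t+k}(u)=e^{\bar\psi_{k}(x)}$ valid for $u\in U$ with $x=p(l(u))$. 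Exponentiating the weight $\Psi_{IK\bar{L}}(x)$ term by term then converts $\alpha(x)=e^{\Psi_{IK\bar{L}}(x)}$ into the product identity for $\rho$ displayed in the statement, and the lower bound $\binom{s}{q-|L|}$ carries over unchanged.

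There is no genuine analytic obstacle at this stage: the entire content of the result already sits in Corollary~\ref{Van}, hence ultimately in Theorem~\ref{Dol} and the degeneration of the Leray spectral sequence proved there. The only point meriting care is the bookkeeping of the three families of characters---matching the $s$ real characters $e^{x_{i}}$ with the real embeddings $\sigma_{i}$, and the two mutually conjugate families $e^{\psi_{k}(x)}$ and $e^{\bar\psi_{k}(x)}$ with the complex embeddings $\sigma_{s+k}$ and $\sigma_{s+t+k}$---so that the weight $\Psi_{IK\bar{L}}$ is transcribed correctly and the exponent identity $2\log|\sigma_{s+k}(u)|=\sum_{i}b_{ik}x_{i}$ underlying $\psi_{k}$ is respected. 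Once this alignment is fixed, both the non-vanishing criterion and the dimension estimate transfer verbatim, completing the proof.
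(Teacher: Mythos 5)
Your proposal is correct and follows exactly the paper's own route: the paper obtains this corollary as an immediate translation of Corollary~\ref{Van}, using the identification $X(K,U)=\Gamma\backslash G$ with ${\mathcal A}_{(G,N)}(\Gamma)={\mathcal A}(U)$, the equivalence $E_{\alpha}=E_{e^{\Psi_{IK\bar{L}}(x)}}$ iff $\alpha(x)=e^{\Psi_{IK\bar{L}}(x)}$ on $\Lambda$, and the same character dictionary $\sigma_{i}(u)=e^{x_{i}}$, $\sigma_{s+k}(u)=e^{\psi_{k}(x)}$, $\sigma_{s+t+k}(u)=e^{\bar\psi_{k}(x)}$ for $x=p(l(u))$. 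Nothing further is needed; both the non-vanishing criterion and the lower bound transfer verbatim, just as you describe.
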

\begin{rem}\label{APV}
This statement implies \cite[Theorem 3.1]{APV}.
Actually, for $p=0$ and $q=1$,  $
H^{0,1}(X(K,U),E)\not=0$
 if and only if $\rho$ is trivial or $\rho=\sigma_{s+t+1},\dots , \sigma_{s+2t}$.
 This seems different from  \cite[Theorem 3.1]{APV}.
But we may remark
 that we use the left action  but on the other hand
  in  \cite{APV} the right action is used. 
The correspondence between the right-quotient and left-quotient is given by the inverse.
\end{rem}

\begin{example}
We consider the case $t = 1$.
In this case, any $U$ is a finite-index subgroup of ${\mathcal O}_{K}^{\ast\, +}$.
For our solvmanifold presentation $\Gamma \backslash G$ of an OT-manifold $X(K,U)$,
we can write
\[\psi_{1}(x)=-\frac{1}{2}(x_{1}+\dots+x_{s})+ \sqrt{-1}\varphi(x)
\]
for some real linear function $\varphi(x)$.
Thus, for  for  multi-indices  $I\subset [s], K,L\subset [t=1]$,
we have 
\[\Psi_{IK\bar{L}}(x)=\left\{ \begin{array}{cccc}
\sum_{i\in I} x_{i}\qquad (K=L=\o)\\
\frac{1}{2}\sum_{i\in I} x_{i}-\frac{1}{2}\sum_{i\in \check{I}} x_{i}+ \sqrt{-1}\varphi(x) \qquad (K=\{1\},L=\o )\\
\frac{1}{2}\sum_{i\in I} x_{i}-\frac{1}{2}\sum_{i\in \check{I}} x_{i}- \sqrt{-1}\varphi(x) \qquad (K=\o,L=\{1\} )\\
-\sum_{i\in \check{I}} x_{i}  \qquad (K=\{1\},L=\{1\} )
 \end{array}\right. .
\]
 We can say that 
 $E_{e^{\Psi_{IK\bar{L}}(x)}}=E_{e^{\Psi_{I^{\prime}K^{\prime}\bar{L^{\prime}}}(x)}}$ if and only if $(I,K,L)=(I^{\prime},K^{\prime},L^{\prime})$,  $(I,K,L)=(\o,\o,\o)$ and  $(I^{\prime},K^{\prime},L^{\prime})=([s],\{1\},\{1\})$ or $(I,K,L)=([s],\{1\},\{1\})$ and  $(I^{\prime},K^{\prime},L^{\prime})=(\o,\o,\o)$.

We compute $H^{p,q}(\Gamma \backslash G, E_{e^{\Psi_{IK\bar{L}}(x)}})$ for each $I,K,L$ so that $E_{e^{\Psi_{IK\bar{L}}(x)}}$ is non-trivial i.e. $(I,K,L)\not=(\o,\o,\o), ([s],\{1\},\{1\})$.
If $K=L=\o$,
\[H^{p,q}(\Gamma \backslash G, E_{e^{\Psi_{IK\bar{L}}(x)}})=\left\{ \begin{array}{cccc}
\langle \alpha_{I}\rangle \wedge \bigwedge^{q}\langle \bar{\alpha}_{1},\dots,  \bar{\alpha}_{s} \rangle \qquad (p=\vert I\vert)\\
0\qquad ({\rm otherwise})
 \end{array}\right. .
\]
If $K=\{1\},L=\o$, 
\[H^{p,q}(\Gamma \backslash G, E_{e^{\Psi_{IK\bar{L}}(x)}})=\left\{ \begin{array}{cccc}
\langle \alpha_{I}\wedge\beta_{1}\rangle \wedge \bigwedge^{q}\langle \bar{\alpha}_{1},\dots,  \bar{\alpha}_{s} \rangle \qquad (p=\vert I\vert+1)\\
0\qquad ({\rm otherwise})
 \end{array}\right. .
\]
If $K=\o,L=\{1\} $, 
\[H^{p,q}(\Gamma \backslash G, E_{e^{\Psi_{IK\bar{L}}(x)}})=\left\{ \begin{array}{cccc}
\langle \alpha_{I}\rangle \wedge \bigwedge^{q-1}\langle \bar{\alpha}_{1},\dots,  \bar{\alpha}_{s} \rangle\wedge \langle\bar{\beta}_{1}\rangle \qquad (p=\vert I\vert)\\
0\qquad ({\rm otherwise})
 \end{array}\right. .
\]
If $K=\{1\},L=\{1\}$,
\[H^{p,q}(\Gamma \backslash G, E_{e^{\Psi_{IK\bar{L}}(x)}})=\left\{ \begin{array}{cccc}
\langle \alpha_{I}\wedge\beta_{1}\rangle \wedge \bigwedge^{q-1}\langle \bar{\alpha}_{1},\dots,  \bar{\alpha}_{s} \rangle  \wedge \langle\bar{\beta}_{1}\rangle\qquad (p=\vert I\vert+1)\\
0\qquad ({\rm otherwise})
 \end{array}\right. .
\]
In particular for any $I,K,L$ with $(I,K,L)\not=(\o,\o,\o), ([s],\{1\},\{1\})$, $H^{0,q}(\Gamma \backslash G, E_{e^{\Psi_{IK\bar{L}}(x)}})=0$.
\begin{rem}
In this case, the equality 
\[\dim H^{p,q}(\Gamma \backslash G, E_{e^{\Psi_{IK\bar{L}}(x)}})= \left( \begin{array}{cc}
s  \\
q-\vert L \vert
\end{array}\right)
\]
holds (see  the inequality as in Corollary \ref{Van}).
\end{rem}

As noted in Remark \ref{tan}, we have
\[\Theta \cong E_{e^{x_{1}}}\oplus \dots \oplus E_{e^{x_{s}}}\oplus E_{e^{\psi_{1}(x)}}\oplus \dots \oplus E_{e^{\psi_{t}(x)}}.\]
Hence, for $0<p\le s+1$,  we have 
\[H^{0,q}(M,\bigwedge ^{p}\Theta)\cong \bigoplus_{\vert I\vert +\vert K\vert=p, L=\o} H^{0,q}(\Gamma \backslash G, E_{e^{\Psi_{IK\bar{L}}(x)}})=0.
\]
This means the following statement.
\begin{prop}
For an OT-manifold $X(K,U)$ with $t=1$,
we have 
\[H^{0,q}(X(K,U),\bigwedge ^{p}\Theta)=0.
\]
\end{prop}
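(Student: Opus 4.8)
The plan is to reduce the statement, via the holomorphic splitting of $\Theta$ recorded in Remark \ref{tan}, to the line-bundle vanishing already in hand, so I restrict to the range $0<p\le s+1$ of the preceding display. For $t=1$ that splitting reads
\[\Theta\cong E_{e^{x_{1}}}\oplus\dots\oplus E_{e^{x_{s}}}\oplus E_{e^{\psi_{1}(x)}}.\]
First I would take the $p$-th exterior power of this direct sum of holomorphic line bundles. Using the isomorphisms $E_{\alpha}\otimes E_{\beta}\cong E_{\alpha\beta}$, each $p$-element subset of the $s+1$ summands contributes a single line bundle, whence
\[\bigwedge^{p}\Theta\cong\bigoplus_{\vert I\vert+\vert K\vert=p,\ L=\emptyset}E_{e^{\Psi_{IK\bar{L}}(x)}},\]
where $I\subset[s]$, $K\subset[t]=[1]$, and the constraint $L=\emptyset$ records that only the characters $e^{x_{i}}$ and $e^{\psi_{1}(x)}$ (never $e^{\bar\psi_{1}(x)}$) appear in $\Theta$. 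Passing to Dolbeault cohomology and using additivity over direct sums gives
\[H^{0,q}\!\left(X(K,U),\bigwedge^{p}\Theta\right)\cong\bigoplus_{\vert I\vert+\vert K\vert=p,\ L=\emptyset}H^{0,q}(\Gamma\backslash G,E_{e^{\Psi_{IK\bar{L}}(x)}}).\]

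Next I would invoke the explicit harmonic description of $H^{p,q}(\Gamma\backslash G,E_{\alpha})$ from Theorem \ref{Dol} displayed before Corollary \ref{Van}. A nonzero class in bidegree $(0,q)$ forces the holomorphic part to be trivial, so its harmonic representative must have the shape $\bar\alpha_{J'}\wedge\bar\beta_{L'}\otimes v_{e^{\Psi_{\emptyset\emptyset\bar{L'}}(x)}}$ with $\vert J'\vert+\vert L'\vert=q$, and it contributes to $H^{0,q}(\Gamma\backslash G,E_{\alpha})$ only when $E_{\alpha}=E_{e^{\Psi_{\emptyset\emptyset\bar{L'}}(x)}}$. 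For $t=1$ the index $L'$ is $\emptyset$ or $\{1\}$, so the only bundles carrying nonzero $H^{0,q}$ are the trivial bundle $E_{1}$ and $E_{e^{\bar\psi_{1}(x)}}$.

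It therefore remains to verify that no summand $E_{e^{\Psi_{IK\bar{L}}(x)}}$ of $\bigwedge^{p}\Theta$ — all of which have $L=\emptyset$ and $\vert I\vert+\vert K\vert=p>0$ — coincides with $E_{1}$ or with $E_{e^{\bar\psi_{1}(x)}}$. This is the coincidence problem for the restrictions to $\Lambda$ of the characters $e^{\Psi_{IK\bar{L}}(x)}$, already resolved in the Example: two such bundles are isomorphic precisely when their index triples agree, the only extra identification being $E_{1}=E_{e^{\Psi_{\emptyset\emptyset\bar\emptyset}(x)}}=E_{e^{\Psi_{[s]\{1\}\overline{\{1\}}}(x)}}$ forced by the unimodularity relation $\sum_{i}x_{i}+\psi_{1}(x)+\bar\psi_{1}(x)=0$ on $\Lambda$. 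The step that needs the most care is exactly this double representation of the trivial bundle: a summand triple $(I,K,\emptyset)$ with $p>0$ is neither $(\emptyset,\emptyset,\emptyset)$, which would force $p=0$, nor $([s],\{1\},\{1\})$, whose last entry is $\{1\}$, so no summand is trivial — in particular the extremal case $p=s+1$, with triple $([s],\{1\},\emptyset)$, is ruled out here rather than by any genericity argument. Likewise $E_{e^{\bar\psi_{1}(x)}}$ has triple $(\emptyset,\emptyset,\{1\})$, which equals no $(I,K,\emptyset)$ and does not belong to the special pair, so no summand equals it. Granting these comparisons, every summand $H^{0,q}(\Gamma\backslash G,E_{e^{\Psi_{IK\bar{L}}(x)}})$ vanishes, and therefore $H^{0,q}(X(K,U),\bigwedge^{p}\Theta)=0$.
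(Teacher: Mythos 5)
Your proof is correct and takes essentially the same route as the paper: both decompose $\bigwedge^{p}\Theta$ via Remark \ref{tan} into the line bundles $E_{e^{\Psi_{IK\bar{L}}(x)}}$ with $L=\emptyset$, $\vert I\vert+\vert K\vert=p>0$, and then kill each summand's $H^{0,q}$ using the harmonic description coming from Theorem \ref{Dol} together with the Example's classification of coincidences among these bundles. If anything, your explicit identification of the trivial bundle and $E_{e^{\bar\psi_{1}(x)}}$ (triples $(\emptyset,\emptyset,\emptyset)\sim([s],\{1\},\{1\})$ and $(\emptyset,\emptyset,\{1\})$) as the only bundles carrying non-zero $H^{0,q}$ is slightly more careful than the paper's blanket assertion that every non-trivial $E_{e^{\Psi_{IK\bar{L}}(x)}}$ has vanishing $H^{0,q}$, which read literally fails for $(I,K,L)=(\emptyset,\emptyset,\{1\})$ but is harmless for the proposition since that triple has $L\neq\emptyset$.
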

Since $H^{0,1}(X(K,U),\Theta)=0$, every OT-manifold $X(K,U)$ with $t=1$ is rigid (cf. \cite{APV}).
Moreover by $H^{0,0}(X(K,U),\bigwedge ^{2}\Theta)=0$,  $X(K,U)$ with $t=1$ does not admit non-zero holomorphic Poisson structure.

Generalized complex structures are geometric structures including complex structures and symplectic structures as special cases introduced by Hitchin and developed by Gualtieri \cite{Gua}.
It is known that small  deformations of a  complex structure regarded as a generalized complex structure can be controlled   by the following three parameters  (see \cite[Section 5]{Gua}):
\begin{itemize}
\item Holomorphic Poisson structures;
\item Usual deformations of the  complex structure;
\item $B$-field transformations.
\end{itemize}
Thus,   for  every OT-manifold $X(K,U)$ with $t=1$,   every  small  deformation of the  complex structure on $X(K,U)$
 regarded as a generalized complex structure is given by  a  $B$-field transformation.
\end{example}

\begin{example}[\cite{O3}]
In \cite[Section 3.1]{O3}, Otiman gives a field  $K$ and a subgroup $U\subset {\mathcal O}_{K}^{\ast +}$ such that 
$s=t=2$ and  for any $u\in U$, $\sigma_{1}(u)\sigma_{3}(u)\sigma_5(u)=\sigma_{2}(u)\sigma_{4}(u)\sigma_6(u)=1$.
By these relations, we can write
\[\psi_{1}(x)=-\frac{1}{2}x_{1}+ \sqrt{-1}\varphi_{1}(x), \psi_{2}(x)=-\frac{1}{2}x_{2}+ \sqrt{-1}\varphi_{2}(x)
\]
for some real linear functions $\varphi_{1}(x), \varphi_{2}(x)$.
Thus, for examples,  we compute
\[H^{p,q}(\Gamma \backslash G )=\left\{ \begin{array}{cccc}
 \bigwedge^{q}\langle \bar{\alpha}_{1},  \bar{\alpha}_{2} \rangle \qquad (p=0)\\
\langle \alpha_{1}\wedge \beta_{1}\wedge\bar\beta_{1},\alpha_{2}\wedge \beta_{2}\wedge\bar\beta_{2}  \rangle  \wedge\bigwedge^{q-1}\langle \bar{\alpha}_{1},  \bar{\alpha}_{2} \rangle    \qquad (p=2,q\ge 1)\\
\langle \alpha_{1}\wedge\alpha_{2}\wedge \beta_{1}\wedge \beta_{2}\wedge\bar\beta_{1}\wedge\bar\beta_{2}  \rangle  \wedge\bigwedge^{q-2}\langle \bar{\alpha}_{1},  \bar{\alpha}_{2} \rangle    \qquad (p=4,q\ge 2)\\
0\qquad ({\rm otherwise})
 \end{array}\right. 
\]
and 
\[H^{p,q}(\Gamma \backslash G ,E_{e^{x_{1}}})=\left\{ \begin{array}{cccc}
 \langle \alpha_{1}\rangle\wedge\bigwedge^{q}\langle \bar{\alpha}_{1},  \bar{\alpha}_{2} \rangle \qquad (p=1)\\
\langle \alpha_{1}\wedge\alpha_{2}\wedge \beta_{2}\wedge\bar\beta_{2}  \rangle  \bigwedge^{q-1}\langle \bar{\alpha}_{1},  \bar{\alpha}_{2} \rangle    \qquad (p=3,q\ge 1)\\
0\qquad ({\rm otherwise})
 \end{array}\right. .
\]
Hence, for $I=K=J=\{1\}$ and $q\ge 2$, we have 
\[H^{2,q}(\Gamma \backslash G, E_{e^{\Psi_{IK\bar{L}}(x)}})=H^{2,q}(\Gamma \backslash G )=2\left( \begin{array}{cc}
2  \\
q-1
\end{array}\right)>\left( \begin{array}{cc}
2  \\
q-1
\end{array}\right).
\]
\end{example}


\begin{thebibliography}{4000000}
\bibitem{APV}
D. Angella, M. Parton, V. Vuletescu, Rigidity of Oeljeklaus-Toma manifolds. Ann. Inst. Fourier (Grenoble) {\bf 70} (2020), no. 6, 2409--2423.  
\bibitem{dek} K.  Dekimpe,
Semi-simple splittings for solvable Lie groups and polynomial structures. Forum Math. {\bf 12} (2000), no. 1, 77--96.
\bibitem{Gua}M. Gualtieri,
Generalized complex geometry. Ann. of Math. (2) {\bf174} (2011), no. 1, 75--123.
\bibitem{IO} N. Istrati, A. Otiman,  De Rham and twisted cohomology of Oeljeklaus-Toma manifolds. Ann. Inst. Fourier (Grenoble) {\bf 69} (2019), no. 5, 2037--2066.

\bibitem{KV}H. Kasuya,  Vaisman metrics on solvmanifolds and Oeljeklaus-Toma manifolds. Bull. Lond. Math. Soc. {\bf 45} (2013), no. 1, 15--26.
\bibitem{KM} H. Kasuya, Minimal models, formality, and hard Lefschetz properties of solvmanifolds with local systems. J. Differential Geom. {\bf93} (2013), no. 2, 269--297.
\bibitem{KR} H. Kasuya,
de Rham and Dolbeault cohomology of solvmanifolds with local systems. Math. Res. Lett.{\bf 21} (2014), no. 4, 781--805.
\bibitem{KRO}H. Kasuya,
Remarks on Dolbeault cohomology of Oeljeklaus-Toma manifolds and Hodge theory.  Proc. Amer. Math. Soc. {\bf149} (2021), no. 7, 3129--3137. 

\bibitem{OT}K. Oeljeklaus, M. Toma,  Non-K\"ahler compact complex manifolds associated to number fields. Ann. Inst. Fourier (Grenoble) {\bf 55} (2005), no. 1, 161--171.
\bibitem{OTH}
A. Otiman, M. Toma, Hodge decomposition for Cousin groups and Oeljeklaus-Toma manifolds,  Ann. Sc. Norm. Super. Pisa Cl. Sci. (5) {\bf 22} (2021), no. 2, 485--503. 

\bibitem{O3}A. Otiman,  Special Hermitian metrics on Oeljeklaus-Toma manifolds,  Bull. Lond. Math. Soc. {\bf54} (2022), no. 2, 655--667. 

\bibitem{R} M. S. Raghunathan, Discrete subgroups of Lie Groups, Springer-Verlag, New York, 1972.

\end{thebibliography}
\end{document}